\newtheorem{lma}{Lemma}[section]
\newaliascnt{thmCt}{lma}
\newtheorem{thm}[thmCt]{Theorem}
\newaliascnt{corCt}{lma}
\newtheorem{cor}[corCt]{Corollary}
\newaliascnt{prpCt}{lma}
\newtheorem{prp}[prpCt]{Proposition}
\theoremstyle{definition}
\newaliascnt{dfnCt}{lma}
\newtheorem{dfn}[dfnCt]{Definition}
\newaliascnt{rmkCt}{lma}
\newtheorem{rmk}[rmkCt]{Remark}
\newaliascnt{pbmCt}{lma}
\newtheorem{pbm}[pbmCt]{Problem}
\newaliascnt{ntnCt}{lma}
\newtheorem{ntn}[ntnCt]{Notation}
\newcounter{theoremintro}
\newaliascnt{thmIntroCt}{theoremintro}
\newtheorem{thmIntro}[thmIntroCt]{Theorem}
\newaliascnt{dfnIntroCt}{theoremintro}
\newtheorem{dfnIntro}[dfnIntroCt]{Definition}
\newaliascnt{corIntroCt}{theoremintro}
\newtheorem{corIntro}[corIntroCt]{Corollary}
\def\today{\number\day\space\ifcase\month\or   January\or February\or
   March\or April\or May\or June\or   July\or August\or September\or
   October\or November\or December\fi\   \number\year}
\newcommand{\andSep}{\,\,\,\text{ and }\,\,\,}
\newcommand{\Bdd}{{\mathcal{B}}}
\newcommand{\ca}{$C^*$-algebra}
\title{Weighted homomorphisms between C*-algebras}
\date{\today}
\author{Eusebio Gardella}
\address{Eusebio Gardella,
Department of Mathematical Sciences,
Chalmers University of Technology and University of Gothenburg,
Gothenburg SE-412 96, Sweden.}
\email{gardella@chalmers.se}
\urladdr{www.math.chalmers.se/~gardella}
\author{Hannes Thiel}
\address{Hannes Thiel,
Department of Mathematics,
Kiel University,
24118 Kiel,
Germany}
\email{hannes.thiel@math.uni-kiel.de}
\urladdr{www.hannesthiel.org}
\thanks{
The first named author was partially supported by the Deutsche Forschungsgemeinschaft (DFG, German Research Foundation) through an Eigene Stelle, and by the Swedish Research Council Grant 2021-04561.
The second named author was partially supported by the ERC Consolidator Grant No.~681207.
}
\subjclass[2010]%
{Primary
47A65, 
47B49; 
Secondary
46L05, 
47A20, 
47B65.  
}
\keywords{$C^*$-algebras, weighted homomorphisms, zero-products, range-orthogonality, domain-orthogonality}
\date{\today}
\begin{document}

\begin{abstract}
We show that a bounded, linear map between \ca{s} is a weighted $\ast$-homomorphism (the central compression of a $\ast$-homomorphism) if and only if it preserves zero-products, range-orthogonality, and domain-orthogonality.
It follows that a self-adjoint, bounded, linear map is a weighted $\ast$-homomorphism if and only if it preserves zero-products.

As an application we show that a linear map between \ca{s} is completely positive, order zero in the sense of Winter-Zacharias if and only if it is positive and preserves zero-products.
\end{abstract}

\maketitle

\section{Introduction}

The study of bounded, linear maps between \ca{s} that preserve various types of orthogonality has a long history.
One of the earliest results, due to Arendt \cite{Are83SpectralLampertiOps}, 
is the description of such maps between unital, commutative \ca{s}: 
every bounded, linear map $\varphi\colon C(X)\to C(Y)$ that preserves zero-products (such maps are also called \emph{disjointness preserving}, or \emph{separating}) is spatially implemented, that is, setting $h:=\varphi(1)\in C(Y)$ and $U:=\{y\in Y: h(y)\neq 0\}$, there exists a continuous map $p\colon U\to X$ such that $\varphi(f)(y)=h(y)f(p(y))$ for every $y\in Y$.
Note that $p$ induces a $\ast$-homomorphism $\pi\colon C(X)\to C_b(U)$, $f\mapsto f\circ p$.
By embedding $C(Y)$ and $C_b(U)$ into a suitably larger (commutative) \ca{} -- the canonical choice is $C(Y)^{**}$ -- we obtain that $\varphi(f)=h\pi(f)=\pi(f)h$ for all $f\in C(X)$.
In particular, $\varphi$ is the compression of the $\ast$-homomorphism $\pi$ by $h$.
For this description of $\varphi$ as the compression of a $\ast$-homomorphism, it is crucial to allow for an enlarged target algebra.
Indeed, an example of Wolff \cite{Wol94DisjPres}, shows that the image of $\pi$ is not in
general contained in $C(Y)$.  

Generalizing this idea to the noncommutative setting, we define:

\begin{dfnIntro}
\label{dfn:WeightedStarHomo}
We say that a map $\varphi\colon A\to B$ between \ca{s} is a \emph{weighted $\ast$-homomorphism} if there exist a \ca{} $D$ containg $B$, a $\ast$-homomorphism $\pi\colon A\to D$ and an element $h\in D$ such that $\varphi(a)=\pi(a)h=h\pi(a)$ for all $a\in A$.
\end{dfnIntro}

By the above-mentioned result of Arendt, it follows that a bounded, linear map between unital, commutative \ca{s} is a weighted $\ast$-homomorphism if and only if it preserves zero-products.
The main result of this paper is a generalization of this characterization to the noncommutative setting; see \autoref{prp:CharWeighted}.
In this case, we also need to consider the following types of orthogonality:
two elements $a$ and $b$ in a \ca{} are said to be \emph{range-orthogonal} if $a^*b=0$;
they are \emph{domain-orthogonal} if $ab^*=0$.

\begin{thmIntro}[{\ref{prp:CharWeighted}}]
\label{thmB}
A bounded, linear map between \ca{s} is a weighted \mbox{$\ast$-ho\-mo}\-mor\-phism if and only if it preserves range-orthogonality, domain-orthog\-o\-nality and zero-products.
\end{thmIntro}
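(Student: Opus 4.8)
The plan is to prove the nontrivial direction: a bounded linear map $\varphi\colon A\to B$ preserving range-orthogonality, domain-orthogonality, and zero-products is a weighted $\ast$-homomorphism. The easy direction is immediate from the definition, since if $\varphi(a)=\pi(a)h=h\pi(a)$ then $\varphi(a)^*\varphi(b)=h^*\pi(a^*b)h$, $\varphi(a)\varphi(b)^*=\pi(ab^*)hh^*$ (using $h\pi(a)=\pi(a)h$ and self-adjointness arguments on the relevant products), and $\varphi(a)\varphi(b)=\pi(ab)h^2$, all of which vanish when the corresponding orthogonality relation holds for $a,b$. For the hard direction I would first reduce to the case where $A$ is unital: if $A$ is nonunital, pass to the unitization and extend $\varphi$ using a limit of $\varphi$ on an approximate unit (which converges strictly, using boundedness), or more cleanly work with the double dual. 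Set $h:=\varphi(1)\in B^{**}$ (the strict/weak* limit of $\varphi(e_\lambda)$ in the nonunital case). The goal is to produce a $\ast$-homomorphism $\pi\colon A\to B^{**}$ with $\varphi(a)=\pi(a)h=h\pi(a)$.

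The core of the argument is to build $\pi$. I would first show that on the positive cone, $\varphi$ behaves multiplicatively relative to $h$. Because $\varphi$ preserves zero-products, for orthogonal positive elements $a\perp b$ we get $\varphi(a)\varphi(b)=0$; combined with range- and domain-orthogonality this forces $\varphi(a)^*\varphi(b)=\varphi(a)\varphi(b)^*=0$ as well. Using functional calculus and the standard trick of approximating $a\mapsto a^{1/2}$, one deduces that $\varphi$ is an \emph{order zero map} on self-adjoint elements up to the weight — this is exactly the kind of structure theory available for order zero maps. Concretely, I expect to show: (i) $\varphi$ restricted to self-adjoint elements is self-adjoint, i.e. $\varphi(a^*)=\varphi(a)^*$; the obstacle here is that preservation of the three orthogonality relations does not obviously give self-adjointness directly, so one likely polarizes or uses that $a^*a\perp$ nothing forces relations among $\varphi(a),\varphi(a)^*$. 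Actually the cleaner route: show $h=\varphi(1)$ is positive (or at least normal) by testing against orthogonal decompositions, then show $\varphi(a)h^* = h^*\varphi(a)$-type commutation.

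The key structural step, which I expect to be the main obstacle, is constructing the multiplicative map $\pi$ and proving $\pi(a)\pi(b)=\pi(ab)$ together with the intertwining $\varphi(a)=\pi(a)h=h\pi(a)$ and the commutation $\pi(a)h=h\pi(a)$. The strategy: let $h=v|h|$ be the polar decomposition in $B^{**}$, and guess $\pi(a):= $ (the ``quotient'' of $\varphi(a)$ by $h$) defined on the hereditary subalgebra/corner determined by the range projection of $h$. For a single positive contraction $a$, write $a=\int_0^1 \lambda\, de_\lambda$ via spectral projections; using that spectral projections of $a$ are mutually ``orthogonal in all three senses'' across disjoint spectral intervals, one shows $\varphi$ sends the spectral resolution of $a$ to a spectral-type resolution times $h$, forcing $\varphi(a^n)=\pi(a)^n h$ for a well-defined $\pi(a)$, hence $\pi$ is multiplicative on each abelian $C^*$-subalgebra; then bootstrap to all of $A$ by considering $a^*a$, $aa^*$, and polarization, checking $\pi$ is linear (linearity of $\pi$ follows from linearity of $\varphi$ once $h$ is ``invertible'' on the relevant corner), $\ast$-preserving, and multiplicative on products of positive elements. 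The delicate points are: (a) the weight $h$ need not be positive or even normal, so one must be careful taking the polar decomposition and working in the corner $pB^{**}p$ where $p$ is the range projection; (b) ensuring $\pi$ is genuinely a $\ast$-homomorphism on the \emph{whole} of $A$ and not just on abelian subalgebras — here one uses that zero-product preservation is strong enough to recover multiplicativity of $\pi$ on arbitrary products via the identity $ab = \frac14\sum i^k (a+i^k b^*)^*(a+i^k b^*) - (\text{correction})$, or more robustly via the polar-decomposition picture of $\varphi$ as $\pi(-)h$ once one knows it holds on a dense/generating set and both sides are bounded; (c) passing from $B^{**}$ down to a separable or more concrete $D\supseteq B$, which is cosmetic — one can simply take $D=B^{**}$, or the $C^*$-algebra generated by $B$, $h$, and the image of $\pi$ inside $B^{**}$. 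Throughout I would lean on the self-adjoint special case first (where the hypotheses collapse to zero-product preservation, as the abstract notes) to organize the computation, and then handle the general bounded case by decomposing $\varphi$ or its relation to $h$ into ``real'' and ``imaginary'' pieces.
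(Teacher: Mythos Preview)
Your proposal has a genuine gap at the core construction of $\pi$. You try to define $\pi(a)$ as ``$\varphi(a)$ divided by $h$'' via polar decomposition of $h=\varphi^{**}(1)$ and spectral theory on abelian subalgebras, hoping to bootstrap from an order-zero-type structure result. This route relies implicitly on $h$ being normal (or even positive) and on $\varphi$ being self-adjoint on self-adjoint elements --- your step~(i). But that step is simply false in general: if $\varphi(a)=h\pi(a)=\pi(a)h$ with $h$ not self-adjoint, then $\varphi(a^*)=h\pi(a)^*$ while $\varphi(a)^*=\pi(a)^*h^*$, and these differ. So the reduction to the self-adjoint/order-zero case cannot work, and ``inverting'' $h$ in a corner is not available since $h$ need not be normal and may be nilpotent. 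Your own list of delicate points (a)--(c) correctly identifies these obstacles, but the sketch offers no mechanism to overcome them.

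The paper avoids inverting $h$ altogether. The key missing idea is an \emph{algebraic} characterization of the three preservation hypotheses: one shows that $\varphi$ preserves zero-products iff $\varphi(ab)\varphi(c)=\varphi(a)\varphi(bc)$, preserves range-orthogonality iff $\varphi(b^*a)^*\varphi(c)=\varphi(a)^*\varphi(bc)$, and preserves domain-orthogonality iff $\varphi(ab)\varphi(c)^*=\varphi(a)\varphi(cb^*)^*$, for all $a,b,c$. With these identities in hand, one represents $B\subseteq\mathcal{B}(H)$ and defines $\pi_\varphi(a)$ \emph{by its action on vectors}: set $\pi_\varphi(a)\varphi(b)\xi:=\varphi(ab)\xi$ and $\pi_\varphi(a)\varphi(c)^*\eta:=\varphi(ca^*)^*\eta$ on the span $H_0$ of such vectors, and zero on $H_0^\perp$. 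The three identities are exactly what is needed to check this is well-defined and a $\ast$-homomorphism; boundedness $\|\pi_\varphi(a)\|\le\|a\|$ comes from an iterated Cauchy--Schwarz argument replacing $a$ by $(a^*a)^{2^n}$. Then $\varphi(a)=\Pi_\varphi(1)\pi_\varphi(a)=\pi_\varphi(a)\Pi_\varphi(1)$ follows by weak*-continuity, with $h=\varphi^{**}(1)$. No normality of $h$, no spectral theory, and no self-adjointness of $\varphi$ are used.
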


Note that two elements in a \emph{commutative} \ca{} are range-orthogonal if and only if they are domain-orthogonal, and if and only if they have zero-product.
In particular, our \autoref{thmB} generalizes the description of bounded, linear mpas between unital, commutative \ca{s} that preserve zero-products.

Bounded, linear maps between (noncommutative) \ca{s} that preserve zero-products have been extensively studied by Wong and coauthors, \cite{CheKeLeeWon03PresZeroProd, KeLiWon04ZeroProdPresOpValuedFcts, Won07ZeroPordPres}, and a general description of the structure of such maps seems out of reach at the moment.
However, a zero-product preserving linear map which is additionally self-adjoint (equivalently, $\ast$-preserving) automatically preserves domain and range orthogonality, and hence we obtain:

\begin{corIntro}[\ref{prp:CharWeightedSelfadjoint}]
\label{CorC}
A self-adjoint, bounded, linear map between \ca{s} is a weighted $\ast$-homomorphism if and only if it preserves zero-products.
\end{corIntro}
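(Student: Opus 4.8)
The plan is to deduce this directly from \autoref{prp:CharWeighted}. One implication requires no work: if $\varphi$ is a weighted $\ast$-homomorphism, then by \autoref{prp:CharWeighted} it preserves zero-products. (Self-adjointness of $\varphi$ plays no role in this direction.)

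For the converse, suppose $\varphi\colon A\to B$ is self-adjoint, bounded, linear, and preserves zero-products. By \autoref{prp:CharWeighted} it suffices to check that $\varphi$ also preserves range-orthogonality and domain-orthogonality. The key observation is that self-adjointness, i.e.\ $\varphi(a^*)=\varphi(a)^*$ for all $a\in A$, lets us trade these two relations for ordinary zero-products. Concretely, if $a,b\in A$ are range-orthogonal, then $a^*b=0$, so $a^*$ and $b$ have zero-product; since $\varphi$ preserves zero-products we get $\varphi(a^*)\varphi(b)=0$, which using $\varphi(a^*)=\varphi(a)^*$ reads $\varphi(a)^*\varphi(b)=0$, that is, $\varphi(a)$ and $\varphi(b)$ are range-orthogonal. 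The argument for domain-orthogonality is symmetric: from $ab^*=0$ one obtains $\varphi(a)\varphi(b)^*=\varphi(a)\varphi(b^*)=0$. Hence $\varphi$ preserves all three relations and \autoref{prp:CharWeighted} applies.

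There is essentially no obstacle here beyond invoking \autoref{prp:CharWeighted}: the content of the corollary is precisely the remark that, for a self-adjoint map, preservation of zero-products is formally stronger than it appears, as it forces preservation of range- and domain-orthogonality as well.
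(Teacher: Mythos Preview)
Your proof is correct and matches the paper's own argument essentially line for line: both directions are handled by invoking \autoref{prp:CharWeighted}, with the only work being the observation that self-adjointness converts zero-product preservation into range- and domain-orthogonality preservation via $\varphi(a^*)=\varphi(a)^*$.
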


The above corollary is the analog of a result of Wolff, \cite{Wol94DisjPres}, who showed that a self-adjoint, bounded linear map between \ca{s} that preserves zero-products \emph{among self-adjoint elements} is a weigthed Jordan $\ast$-homomorphisms.
(Wolff showed the result under the assumption that the domain is unital.
This condition can be removed using \cite[Lemma~2.2]{Won07ZeroPordPres}.)

In \cite{WinZac09CpOrd0}, Winter and Zacharias introduced completely positive maps of \emph{order zero} as those completely positive  maps between \ca{s} that preserve zero-products among positive elements (equivalently, among self-adjoint elements). 
The main result of \cite{WinZac09CpOrd0} is a characterization of completely positive maps of order zero as the \emph{positively} weighted $\ast$-homomorphisms.
These maps play an important role in the fine structure theory of nuclear \ca{s};
see \cite{HirKirWhi12DecApproxNucl, WinZac10NuclDim}.

Using that a positive, linear map between \ca{s} is automatically bounded and self-adjoint, we obtain the following new characterization of completely positive, order zero maps:

\begin{corIntro}[\ref{prp:CharOrderZero}]
A map between \ca{s} is completely positive, order zero if and only if it is positive and preserves zero-products.

In particular, every positive, zero-product preserving map between \ca{s} is completely positive (and order zero).
\end{corIntro}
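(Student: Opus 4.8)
For the forward implication, observe first that a completely positive map is positive, so the only thing to verify is that an order zero map preserves \emph{all} zero-products, not merely those among positive (equivalently, self-adjoint) elements. The plan here is to invoke the structure theorem of Winter and Zacharias \cite{WinZac09CpOrd0}, which realizes a completely positive order zero map in the form $\varphi(a)=h\pi(a)=\pi(a)h$ for a $\ast$-homomorphism $\pi$ and a positive element $h$ commuting with the range of $\pi$. Then, for $a,b\in A$ with $ab=0$, one simply computes $\varphi(a)\varphi(b)=h\pi(a)h\pi(b)=h^2\pi(ab)=0$.

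For the converse, suppose $\varphi\colon A\to B$ is positive and preserves zero-products. A positive linear map between \ca{s} is automatically bounded and self-adjoint, so \autoref{CorC} applies and presents $\varphi$ as a weighted $\ast$-homomorphism: $\varphi(a)=\pi(a)h=h\pi(a)$ for some \ca{} $D\supseteq B$, a $\ast$-homomorphism $\pi\colon A\to D$, and $h\in D$. The key step is to arrange that the weight may be taken positive, and for this I would pass to the bidual. Let $(e_\lambda)_\lambda$ be an increasing approximate unit of $A$ and let $p\in\pi(A)''\subseteq D^{**}$ be the $\sigma$-weak limit of $(\pi(e_\lambda))_\lambda$, that is, the unit of $\pi(A)''$. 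Since $h$ commutes with the self-adjoint set $\pi(A)$, it commutes with $\pi(A)''$; in particular $hp=ph$. Now $\varphi(e_\lambda)=h\pi(e_\lambda)\to hp$ $\sigma$-weakly, while by positivity of $\varphi$ the net $(\varphi(e_\lambda))_\lambda$ is increasing and norm-bounded, hence $\sigma$-weakly convergent to the positive element $k:=\sup_\lambda\varphi(e_\lambda)$; therefore $hp=k\geq 0$. Using that $p$ acts as a unit on $\pi(A)$, one checks that $\varphi(a)=h\pi(a)=(hp)\pi(a)=k\pi(a)$ and, symmetrically, $\varphi(a)=\pi(a)k$, while $k$ still commutes with $\pi(A)$. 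Thus, after replacing $h$ by $k$, we may assume $h\geq 0$.

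It then remains to read off the conclusion. Since $h\geq 0$ commutes with $\pi(A)$, we have $\varphi(a)=h^{1/2}\pi(a)h^{1/2}$ for all $a\in A$, which exhibits $\varphi$ as the composition of the completely positive $\ast$-homomorphism $\pi$ with the compression by $h^{1/2}$; hence $\varphi$ is completely positive (and it takes values in $B$ by hypothesis). Moreover $ab=0$ gives $\varphi(a)\varphi(b)=h^2\pi(ab)=0$, so $\varphi$ preserves zero-products, and in particular is order zero; equivalently, $\varphi$ is now visibly a positively weighted $\ast$-homomorphism, so one may instead quote the Winter--Zacharias characterization directly. The final ``in particular'' assertion is just the backward implication restated. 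The part I expect to be most delicate is the upgrade of the weight in the bidual: identifying $hp$ with the $\sigma$-weak limit of the increasing net $(\varphi(e_\lambda))_\lambda$ and checking that $k=hp$ still implements $\varphi$; if a small target algebra is preferred, one can afterwards corestrict $\pi$ and $k$ to $C^*\bigl(B\cup\pi(A)\cup\{k\}\bigr)\subseteq D^{**}$.
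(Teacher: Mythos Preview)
Your argument is correct, and the forward direction matches the paper's intended reasoning via the Winter--Zacharias structure theorem. For the backward direction, however, you work harder than necessary. You invoke \autoref{CorC} only through the abstract existence statement of \autoref{dfn:WeightedStarHomo} and then run an approximate-unit argument in $D^{**}$ to replace the weight $h$ by the positive element $k=hp$. The paper instead uses the sharper form of the structure result, \autoref{prp:CharWeighted}(2), which provides the \emph{canonical} weight $h=\varphi^{**}(1)$. Since positivity of $\varphi$ implies positivity of $\varphi^{**}$, this $h$ is automatically positive, and the conclusion $\varphi(a)=h^{1/2}\pi_\varphi(a)h^{1/2}$ follows in one line with no passage to a further bidual. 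Your detour works, but once the canonical weight is on the table the whole ``upgrade'' step is unnecessary; this is also why the paper can simply assert, just before the corollary, that a weighted $\ast$-homomorphism is positive if and only if its (canonical) weight is.
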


In \autoref{sec:preservingOrth}, we study bounded, linear maps between \ca{s} that preserve zero-products, range-orthogonality, or domain-orthogonality and we provide characterizations of these properties in terms of algebraic identities.
For example, a bounded, linear map $\varphi$ preserves zero-products if and only if $\varphi(ab)\varphi(c)=\varphi(a)\varphi(bc)$ for all $a,b,c$ in the domain;
see \autoref{prp:CharOrthPres}.
Using this, we show that preserving these types of orthogonality passes to bitransposes and tensor products.

We also consider maps preserving zero-TRO-products or `usual' orthogonality.

In \autoref{sec:support}, we consider a bounded, linear map $\varphi\colon A\to B$ between \ca{s}, and a $\ast$-representation $B\subseteq\Bdd(H)$ of $B$ on some Hilbert space $H$.
Assuming that~$\pi$ preserves zero-products, range-orthogonality, and domain-orthogonality, we construct a (canonical) $\ast$-homomorphism $\pi_\varphi\colon A\to\Bdd(H)$ that takes image in the bicommutant $B''\subseteq\Bdd(H)$ and that satisfies $\pi_\varphi(a)\varphi(b)=\varphi(ab)$ and $\varphi(a)\pi_\varphi(b)=\varphi(ab)$ for all $a,b\in A$.
This is the main technical step in the proof of our main result, which we complete in \autoref{sec:weightedHomo}.

\section{Maps preserving different notions of orthogonality}
\label{sec:preservingOrth}

In this section, we study bounded linear maps between \ca{s} that preserve certain notions of orthogonality. 
We begin by defining these notions.

\begin{dfn}
Let $A$ be a \ca, and let $a,b,c\in A$. We say that
\begin{itemize} 
\item
$a$ and $b$ have \emph{zero-product} if $ab=0$,
\item $a$ and $b$ are \emph{range-orthogonal} if $a^*b=0$;
\item
$a$ and $b$ are \emph{domain-orthogonal} if $ab^*=0$;
\item
$a$ and $b$ are \emph{orthogonal} if $a^*b=ab^*=0$;
\item $a, b$ and $c$ have \emph{zero-TRO-product} if $ab^*c=0$.
\end{itemize} 
\end{dfn}

We will be interested in maps preserving certain combinations of the above notions. 
More explicitly, we say that a map $\varphi\colon A\to B$ between \ca{s} preserves:
\begin{enumerate}
\item
zero-products if $ab=0$ implies $\varphi(a)\varphi(b)=0$
\item
range-orthogonality if $a^*b=0$ implies $\varphi(a)^*\varphi(b)=0$;
\item
domain-orthogonality if $ab^*=0$ implies $\varphi(a)\varphi(b)^*=0$;
\item
orthogonality if $ab^*=a^*b=0$ implies $\varphi(a)\varphi(b)^*=\varphi(a)^*\varphi(b)=0$;
\item
zero-TRO-products if $ab^*c=0$ implies $\varphi(a)\varphi(b)^*\varphi(c)=0$.
\end{enumerate}

We provide characterizations of (1), (2), (3) and (5) in terms of algebraic equations (\autoref{prp:CharOrthPres}), which we then use to show that each of those properties passes to the bitranspose (\autoref{prp:Bitranspose}) and to tensor products (\autoref{prp:TensProd}).
In particular, if a bounded linear map $A\to B$ preserves zero-products, then so does every amplification $M_n(A)\to M_n(B)$ -- and analogously for maps preserving range-orthogonality, preserving domain-orthogonality, or preserving zero-TRO-products;
see \autoref{prp:CompletelyOrthPres}.
In particular, a zero-product preserving map is automatically completely zero-product preserving (and similarly for range-orthogonality, domain-orthogonality, and zero-TRO-products).

The situation for orthogonality-preserving maps is different:
As noted in \cite[Section~4]{Gar20CompleteOrthPres}, the transpose map $\tau$ on $M_2(\mathbb{C})$ preserves orthogonality, but its amplification $\tau^{(2)}$ does not.
Thus, orthogonality-preservation does not pass to tensor products.
Nevertheless, using the structure result for orthogonality-preserving maps obtained 
in \cite{BurFerGarMarPer08OrthPresJB}, we show that the bitranspose of an orthogonality-preserving map is again orthogonality-preserving;
see \autoref{prp:Bitranspose}.

Part~(1) of the next result follows from \cite[Lemma~3.4]{AlaBreExtVil09MapsPresZP}.
(Note that by Examples~1.3(2) and Theorem~2.11 in \cite{AlaBreExtVil09MapsPresZP}, every \ca{} has property~$(\mathbb{B})$ introduced in \cite{AlaBreExtVil09MapsPresZP}.)
The proof in \cite[Lemma~3.4]{AlaBreExtVil09MapsPresZP} is based on the fact that every element in a unital \ca{} is a linear combination of unitaries and that unitaries are doubly power-bounded operators.
For maps between von Neumann algebras (and more generally, \ca{s} of real rank zero), Part~(1) of the next result has also been obtained in \cite[Theorem~4.1]{CheKeLeeWon03PresZeroProd}, using that the linear span of projections is dense.

Our proof is inspired by \cite[Lemma~4.4]{CheKeLeeWon03PresZeroProd}, and uses methods that go back to Wolff, \cite{Wol94DisjPres}.
These methods allow us to also characterize maps that preserve range-orthogonality or domain-orthogonality.

\begin{thm}
\label{prp:CharOrthPres}
Let $\varphi\colon A\to B$ be a bounded, linear map between \ca{s}.
Then:
\begin{enumerate}
\item
$\varphi$ preserves zero-products if and only if $\varphi(ab)\varphi(c)=\varphi(a)\varphi(bc)$ for all $a,b,c\in A$.
\item
$\varphi$ preserves range-orthogonality if and only if $\varphi(b^*a)^*\varphi(c)=\varphi(a)^*\varphi(bc)$ for all $a,b,c\in A$.
\item
$\varphi$ preserves domain-orthogonality if and only if $\varphi(ab)\varphi(c)^*=\varphi(a)\varphi(cb^*)^*$ for all $a,b,c\in A$.
\item
$\varphi$ preserves zero-TRO-products if and only if the equality $\varphi(ab)\varphi(c)^*\varphi(de)=\varphi(a)\varphi(d^*cb^*)^*\varphi(e)$ holds for all $a,b,c,d,e\in A$.
\end{enumerate} 
\end{thm}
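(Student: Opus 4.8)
The plan is to prove all four equivalences in parallel, since they share the same structure: in each case one implication is a trivial polarization-type computation and the other requires a genuine functional-analytic argument using the "unitaries span a unital \ca{}" trick of Wolff.

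First I would dispose of the easy directions, which go from the algebraic identity to the orthogonality preservation. For (1), if $ab=0$ then taking $c=b$ in $\varphi(ab)\varphi(c)=\varphi(a)\varphi(bc)$ gives $0 = \varphi(a)\varphi(b^2)$; this is not immediately $\varphi(a)\varphi(b)=0$, so more care is needed — one should instead factor $ab = (a x)(y b)$ appropriately, or better, observe that $ab = 0$ lets us write $a = a e$ and $b = f b$ for suitable elements with $ef = 0$ and run the identity on those. Actually the cleanest route: given $ab=0$, pass to the unitization, pick an approximate unit, and use that $ab=0$ implies $a\cdot b = 0$ can be rewritten via the identity applied to the triple $(a, b, b)$ or $(a,a,b)$ after a one-sided approximate-unit absorption. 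For (2)–(4) the same remarks apply with the adjoints inserted in the right slots. I expect each of these easy directions to be a short paragraph once the bookkeeping with approximate units is set up.

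The substantive direction is: orthogonality preservation implies the algebraic identity. Here the strategy, following \cite[Lemma~4.4]{CheKeLeeWon03PresZeroProd} and Wolff, is to first reduce to the unital case by passing to bitransposes (a bounded linear map extends to $\varphi^{**}\colon A^{**}\to B^{**}$, which is weak*-continuous and unital-domain; one must check that $\varphi^{**}$ still preserves the relevant orthogonality, but this is exactly the content of the later \autoref{prp:Bitranspose} — except that result is proved \emph{using} this one, so instead we must do the bitranspose argument by hand here, or restrict to working with $A$ replaced by $\tilde A$ and approximate units). Assuming $A$ is unital, the key point is that for a unitary $u\in A$ and any $a$, the elements $a$ and $u - u = 0$... rather: for a projection $p$, the elements $pa$ and $(1-p)c$... the real mechanism is that $x(1-x^{-1}y) $-type factorizations let one encode $\varphi(ab)\varphi(c) - \varphi(a)\varphi(bc)$ as a sum of terms $\varphi(\alpha)\varphi(\beta)$ with $\alpha\beta = 0$. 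Concretely: for invertible $b$, write $ab \cdot (b^{-1}c) = ac'$ and $a \cdot (bc)$; since $(ab)(c - b^{-1}\cdot bc)=0$ trivially when... I would instead use the identity $ (ab)\,(1\!\cdot\! c) = (a)\,(b c)$ at the level of products and the observation that $ab\otimes$-type bilinear map $\psi(x,y):=\varphi(xb)\varphi(y)-\varphi(x)\varphi(by)$ vanishes whenever $x,y$ are such that a suitable product is zero; then use that unitaries span $A$ together with the fact that a unitary $u$ satisfies: $(u-\lambda)$ and $(u-\lambda)^{-1}$-manipulations show the bilinear form is identically zero. The cleanest published version of this is the "doubly power-bounded" argument of \cite[Lemma~3.4]{AlaBreExtVil09MapsPresZP}: for a unitary $u$, both $u$ and $u^{-1}$ are power-bounded, so a Liouville/Neumann-series argument forces the relevant holomorphic function $\lambda \mapsto \varphi((a(u-\lambda))b)\varphi(\cdots)$ to be constant, yielding the identity on unitaries and hence, by linearity and density of their span, on all of $A$.

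The main obstacle is the careful treatment of the non-unital case and the interplay with adjoints in parts (2)–(4). For (2), range-orthogonality $a^*b=0$ involves $a^*$, so the bilinear (really sesquilinear) form $(a,c)\mapsto \varphi(b^*a)^*\varphi(c) - \varphi(a)^*\varphi(bc)$ is conjugate-linear in $a$; one must check that the unitary-span argument still applies, which it does since the span of unitaries is still everything and conjugate-linearity is harmless. For (4), the zero-TRO-product condition $ab^*c=0$ gives a \emph{trilinear} form and the factorization bookkeeping is heavier, but the same "plug in unitaries, use power-boundedness, deduce the polynomial identity is constant in each slot separately" machine applies. Throughout, the place to be careful is ensuring that when we write $a = \sum \lambda_i u_i$ with $u_i$ unitary in $\tilde A$, the extra unit does not leak into the image: this is handled by noting $\varphi$ is only defined on $A$, so one works with $\tilde\varphi\colon\tilde A\to B^{**}$ (or $B$ unitized) extending $\varphi$ by $\tilde\varphi(1):=$ weak*-limit of $\varphi(e_\lambda)$, and checks the orthogonality hypotheses survive — this is the one genuinely technical lemma underneath the whole proof.
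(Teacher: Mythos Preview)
Your easy direction (algebraic identity $\Rightarrow$ orthogonality preservation) is right and matches the paper: one uses an approximate unit to write $b\approx bc$, applies the identity to the triple $(a,b,c)$ to get $\varphi(a)^*\varphi(bc)=\varphi(b^*a)^*\varphi(c)=0$, and lets $\varepsilon\to 0$. Your detour through $(a,b,b)$ and $(a,a,b)$ is unnecessary.

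For the hard direction you and the paper diverge. The paper does \emph{not} use unitaries or power-boundedness. Instead it reduces by linearity to self-adjoint $b$ with $\|b\|<1$, uses continuous functional calculus to build bump functions $f_{j,k}(b)\in A$ whose weak* limits in $A^{**}$ are the spectral projections $e_j$ of $b$ for the intervals $(\tfrac{j}{m},\tfrac{j+1}{m}]$, and exploits the pairwise orthogonality of the $f_{j,k}(b)$ \emph{inside $A$} (so $\varphi$ applies directly) to kill the cross-terms $\Phi(e_j a)^*\Phi(e_{j'}c)$ for $j\neq j'$. The approximation $b\approx\sum_j\tfrac{j}{m}e_j$ then yields the identity up to $O(1/m)$. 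This sidesteps your circularity worry entirely: the bitranspose $\Phi$ is used only as a weak*-continuous extension, never assumed to preserve any orthogonality.

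Your unitaries route also works, but not quite as you frame it. The clean execution for~(2) is not to ``check conjugate-linearity is harmless'' but to observe that $\tilde\psi(x,y):=\varphi(x^*)^*\varphi(y)$ is genuinely \emph{bilinear} and vanishes whenever $xy=0$ (since then $(x^*)^*y=0$ and $\varphi$ preserves range-orthogonality); property~$(\mathbb{B})$ for \ca{s} from \cite[Theorem~2.11]{AlaBreExtVil09MapsPresZP}, valid non-unitally, then gives $\tilde\psi(xb,c)=\tilde\psi(x,bc)$, which after substituting $a=x^*$ is exactly~(2). Part~(3) is symmetric, and~(4) needs two such passes (once for $b$, once for $d$), which your sketch does not make visible. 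So your approach is shorter if one is willing to import property~$(\mathbb{B})$ as a black box; the paper's functional-calculus argument is self-contained and treats all four parts uniformly with the same spectral-partition mechanism.
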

\begin{proof}
We only prove statement~(2).
Statements~(1), (3) and~(4) are shown analogously.

\emph{Backward implication:}
Let $a,b\in A$ satisfy $a^*b=0$.
We need to verify that $\varphi(a)^*\varphi(b)=0$.
Let $\varepsilon>0$.
Using an approximate identity in $A$, we can choose $c\in A$ satisfying $\|b-bc\|<\varepsilon$.
Using that $b^*a=0$ and applying the assumption at the first step, we get
\[
\varphi(a)^*\varphi(bc)
= \varphi(b^*a)^*\varphi(c)
=0
\]
and therefore
\[
\|\varphi(a)^*\varphi(b)\|
= \|\varphi(a)^*\varphi(b) - \varphi(a)^*\varphi(bc)\|
\leq \|\varphi(a)^*\| \|\varphi\| \|b-bc\|
\leq \varepsilon \|\varphi(a)^*\| \|\varphi\|.
\]
Since $\varepsilon>0$ was arbitrary, we get $\varphi(a)^*\varphi(b)=0$, as desired.

\emph{Forward implication:}
We assume that $\varphi$ preserves range-orthogonality.
Given $a,b,c\in A$, we need to show that 
\[
\varphi(b^*a)^*\varphi(c)=\varphi(a)^*\varphi(bc).
\]
Using linearity of $\varphi$, we may assume that $b$ is self-adjoint and satisfies $\|b\|<1$.

For $k\geq 2$, let $f_k\colon\mathbb{R}\to[0,1]$ be the continuous function that takes the value $0$ on $(-\infty,\tfrac{1}{k}]\cup[1+\tfrac{1}{k},\infty)$, that takes the value $1$ on $[\tfrac{2}{k},1]$, and that is affine on $[\tfrac{1}{k},\tfrac{2}{k}]$ and on $[1,1+\tfrac{1}{k}]$.
Let $m\geq 1$.
For each $j\in\{-m,\ldots,m-1\}$ and $k\geq 2$, we define $f_{j,k}\colon\mathbb{R}\to[0,1]$ by
\[
f_{j,k}(t) := f_k(mt-j)
\]
for $t\in\mathbb{R}$. Note that the support of $f_{j,k}$ is 
$\big(\frac{j}{m}+\frac{1}{mk}, \tfrac{j+1}{m}+\frac{1}{mk}\big)$. 
In particular, $f_{j,k}$ and $f_{j',k'}$ are orthogonal if 
$k\neq k'$.
Moreover, the sequence $(f_{j,k})_k$ converges pointwise to the characteristic function of $\big(\tfrac{j}{m},\tfrac{j+1}{m}\big]$.

Let $\Phi:=\varphi^{**}\colon A^{**}\to B^{**}$ denote the bitranspose of $\varphi$.
In $A^{**}$, the sequence $(f_{j,k}(b))_k$ converges weak* to a projection $e_j$.
\vspace{.2cm}

\textbf{Claim~1:} \emph{Let $j>j'$.
Then $\Phi(e_j^*a)^*\Phi(e_{j'}c)=0$.}
To prove the claim, let $k\geq 2$.
Given $k'$ with $k\leq k'$, we have $(f_{j,k}(b)^*a)^*(f_{j',k'}(b)c)=0$
since $f_{j,k}f_{j',k'}=0$, and thus
\[
\varphi(f_{j,k}(b)^*a)^* \varphi(f_{j',k'}(b)c)=0.
\]
Using that $\Phi$ is weak*-continuous, and that multiplication in $B^{**}$ is separately weak*-continuous, we get
\[
\varphi(f_{j,k}(b)^*a)^*\Phi(e_{j'}c)
= \text{wk*-}\lim_{k'} \varphi(f_{j,k}(b)^*a)^*\varphi(f_{j',k'}(b)c)
= 0.
\]
Since this holds for every $k\geq 2$, we get
\[
\Phi(e_j^*a)^*\Phi(e_{j'}c)
= \text{wk*-}\lim_{k} \Phi(f_{j,k}(b)^*a)^*\varphi(e_{j'}c)
= 0.
\]
This proves the claim.
\vspace{.2cm}

\textbf{Claim~2:} \emph{Let $j<j'$.
Then $\Phi(e_j^*a)^*\Phi(e_{j'}c)=0$.}
The proof is analogous to that of the previous claim.
For fixed $k'\geq 2$, and $k\geq k'$, we have $(f_{j,k}(b)^*a)^*(f_{j',k'}(b)c)=0$, and thus $\varphi(f_{j,k}(b)^*a)^* \varphi(f_{j',k'}(b)c)=0$.
We then get
\[
\Phi(e_j^*a)^*\varphi(f_{j',k'}(b)c)
= \text{wk*-}\lim_{k'} \varphi(f_{j,k}(b)^*a)^*\varphi(f_{j',k'}(b)c)
= 0.
\]
Since this holds for every $k'\geq 2$, we get
\[
\Phi(e_j^*a)^*\Phi(e_{j'}c)
= \text{wk*-}\lim_{k} \varphi(f_{j,k}(b)^*a)^*\Phi(e_{j'}c)
= 0.
\]
This proves the claim.
\vspace{.2cm}

Using that $b$ is selfadjoint and $\|b\|<1$, we have
\[
\Big\| b - \sum_{j=-m}^{m-1} \tfrac{j}{m} e_j \Big\| \leq \tfrac{1}{m}.
\]

Moreover, we have $\sum_j e_j = 1$ and therefore $a=\sum_j e_j^*a$ and $c=\sum_j e_jc$.
Set $K:=\|\varphi\|^2\|a\|\|c\|$.
We write $x\approx_\varepsilon y$ to mean $\|x-y\|\leq\varepsilon$.
Using Claims~1 and~2 at the second and third steps, we get
\begin{align*}
\varphi(b^*a)^*\varphi(c)
&\approx_{\frac{K}{m}} \Phi\Big( \sum_{j=-m}^{m-1} \tfrac{j}{m} e_j^*a \Big)^*
\Phi\Big( \sum_{j=-m}^{m-1} e_j c \Big)\\
&= \sum_{j=-m}^{m-1} \tfrac{j}{m} \Phi(e_j^*a)^*\Phi( e_j c) \\
&= \Phi\Big( \sum_{j=-m}^{m-1} e_j^*a \Big)^*
\Phi\Big( \sum_{j=-m}^{m-1} \tfrac{j}{m} e_j c \Big)\\
&\approx_{\frac{K}{m}} \varphi(a)^*\varphi(bc).
\end{align*}
Since this holds for every $m\geq 1$, we get the desired equality.
\end{proof}

Peralta showed in \cite[Proposition~3.7]{Per15Note2Local} that if $\varphi\colon A\to B$ preserves zero-products, then so does the restriction of the bitranspose $\varphi^{**}\colon A^{**}\to B^{**}$ to the multiplier algebra $M(A)$.
We generalize this by showing that $\varphi^{**}$ preserves in fact all zero-products in $A^{**}$.
We obtain similar results for range-orthogonality, domain-orthogonality and zero-TRO-products.
Using the structure result for orthogonality-preserving maps from \cite{BurFerGarMarPer08OrthPresJB}, we show that orthogonality-preservation also passes to bitransposes.

\begin{prp}
\label{prp:Bitranspose}
Let $\varphi\colon A\to B$ be a bounded, linear map between \ca{s}.
If~$\varphi$ preserves zero-products (range-orthogonality, domain-orthogonality, zero-TRO-products, orthogonality), then so does the bitranspose $\varphi^{**}\colon A^{**}\to B^{**}$.
\end{prp}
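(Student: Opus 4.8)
The plan is to reduce each statement to the corresponding algebraic characterization from \autoref{prp:CharOrthPres}, exploiting that the relevant identities are polynomial in $\varphi$ together with multiplication, all of which interact well with the weak* topology. Concretely, for the zero-product case, \autoref{prp:CharOrthPres}(1) tells us that $\varphi$ preserves zero-products if and only if $\varphi(ab)\varphi(c)=\varphi(a)\varphi(bc)$ for all $a,b,c\in A$; I want to upgrade this to the same identity for $\varphi^{**}$ on all of $A^{**}$, after which the converse direction of \autoref{prp:CharOrthPres}(1) (applied to $\varphi^{**}\colon A^{**}\to B^{**}$) finishes the job. The same scheme works verbatim for range-orthogonality via part (2), domain-orthogonality via part (3), and zero-TRO-products via part (4).

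The core step is a three-fold weak*-density argument. Fix $a,b,c\in A^{**}$. By Kaplansky density (or just the definition of the bitranspose) choose nets $(a_i)$, $(b_j)$, $(c_k)$ in $A$ converging weak* to $a$, $b$, $c$ respectively, with norms bounded by $\|a\|$, $\|b\|$, $\|c\|$. Since $\varphi^{**}=\Phi$ is weak*-continuous and multiplication in $B^{**}$ is separately weak*-continuous, and since multiplication in $A^{**}$ is also separately weak*-continuous, one iterates: first let $c_k\to c$ to pass from $\varphi(a_ib_j)\varphi(c_k)=\varphi(a_i)\varphi(b_jc_k)$ to $\Phi(a_ib_j)\Phi(c)=\Phi(a_i)\Phi(b_jc)$ (here $a_ib_j\in A$ and $a_i\in A$, so the left-hand factors are genuine values of $\varphi$, and on the right $b_jc_k\to b_jc$ weak* so $\Phi(b_jc_k)\to\Phi(b_jc)$); then let $b_j\to b$, using $a_ib_j\to a_ib$ and $b_jc\to bc$ weak*; then let $a_i\to a$, using $a_ib\to ab$ and leaving $\Phi(a)$, $\Phi(bc)$ fixed. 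Each passage uses separate weak*-continuity of multiplication in exactly one variable at a time, so no joint continuity is needed. This yields $\Phi(ab)\Phi(c)=\Phi(a)\Phi(bc)$ for all $a,b,c\in A^{**}$, and \autoref{prp:CharOrthPres}(1) applied to $\Phi$ gives that $\Phi$ preserves zero-products.

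For the range-orthogonality, domain-orthogonality, and zero-TRO-product cases the bookkeeping is slightly heavier because the identities in \autoref{prp:CharOrthPres} involve adjoints (e.g.\ $\varphi(b^*a)^*\varphi(c)=\varphi(a)^*\varphi(bc)$), but the adjoint is also weak*-continuous on bounded sets, so the same iterated-limit argument applies; one simply has to be a little careful about which net is moved in which order so that every intermediate expression is either a genuine value of $\varphi$ or is reached by moving one variable weak*. I expect this careful ordering of the three (or, for zero-TRO-products, five) limits to be the only real subtlety — there is nothing deep, but one must check that at each stage the argument of $\Phi$ lands in $A$ or is a weak* limit in a single variable of things in $A$. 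The orthogonality-preserving case is genuinely different: there is no polynomial characterization as in \autoref{prp:CharOrthPres}, and (as recalled in the excerpt) orthogonality-preservation need not even pass to amplifications. Here the plan is instead to invoke the structure theorem for orthogonality-preserving maps from \cite{BurFerGarMarPer08OrthPresJB}, which decomposes such a $\varphi$ in terms of a (weighted) Jordan $\ast$-homomorphism-type data; one then checks that the bitranspose of each constituent piece is again of the required form (bitransposes of $\ast$-homomorphisms are $\ast$-homomorphisms, bitransposes of Jordan $\ast$-homomorphisms are Jordan $\ast$-homomorphisms, and the weighting element passes to $B^{**}$), and reassembles these to conclude that $\varphi^{**}$ is orthogonality-preserving. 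The main obstacle overall is thus not the density arguments but marshalling the external structure result cleanly in this last case.
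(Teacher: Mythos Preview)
Your proposal is correct and follows essentially the same approach as the paper: reduce the first four cases to the algebraic identities of \autoref{prp:CharOrthPres} and extend them one variable at a time via weak*-continuity of $\varphi^{**}$ and separate weak*-continuity of multiplication, and handle orthogonality by invoking the structure theorem from \cite{BurFerGarMarPer08OrthPresJB}. The paper's treatment of the orthogonality case is slightly more specific than your sketch: the structure result produces a \emph{triple} homomorphism $\pi$ (not Jordan $\ast$-homomorphism data) with $\varphi(a)=hv^*\pi(a)=\pi(a)v^*h$, one checks by the same density argument that $\pi^{**}$ is again a triple homomorphism, and then uses that triple homomorphisms preserve orthogonality because $a\perp b$ is equivalent to $\{a,a,b\}=0$.
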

\begin{proof}
Set $\Phi=\varphi^{**}\colon A^{**}\to B^{**}$.
We first show the result for the case that $\varphi$ preserves zero-products.
By \autoref{prp:CharOrthPres}(1), we have $\varphi(ab)\varphi(c)=\varphi(a)\varphi(bc)$ for every $a,b,c\in A$.
For $a,b,c\in A^{**}$, we denote by $E(a,b,c)$ the (potentially false) identity
\begin{align}\label{eqn:Phi}\tag*{$E(a,b,c)$}
 \Phi(ab)\Phi(c)=\Phi(a)\Phi(bc).
\end{align}
By assumption, $E(a,b,c)$ holds whenever $a,b,c\in A$.
In three steps, we will show that it holds for all $a,b,c\in A^{**}$.

Let $a\in A^{**}$ and $b,c\in A$ be given. 
Choose a net $(a_\lambda)_\lambda$ in $A$ that converges weak* to $a$, and note 
that $E(a_\lambda,b,c)$ is true.
Using at the first step that $\Phi$ is weak*-continuous and 
that the multiplication operations in $A^{**}$ and $B^{**}$ are separately weak*-continuous, 
and using $E(a_\lambda,b,c)$ at the second step, we get
\[
\Phi(ab)\Phi(c)
= \text{wk*-}\lim_\lambda \Phi(a_\lambda b)\Phi(c)
=\text{wk*-}\lim_\lambda \Phi(a_\lambda)\Phi(bc)
= \Phi(a)\Phi(bc),
\]
so $E(a,b,c)$ holds whenever $a\in A^{**}$ and $b,c\in A$.

Given $a,b\in A^{**}$ and $c\in A$, we choose as above a net $(b_\lambda)_\lambda$ in 
$A$ converging weak* to $b$. By the previous paragraph, $E(a,b_\lambda,c)$ is true
for all $\lambda$. Using weak*-continuity of $\Phi$ and the fact 
that the multiplication operations in $A^{**}$ and $B^{**}$ are separately weak*-continuous,
we deduce that \eqref{eqn:Phi} holds for the triple $(a,b,c)$.
The general case for arbitrary $a,b,c\in A^{**}$ is proved similarly. 
We deduce that $\Phi$ preserves zero-products.

Using the characterizations of maps preserving range-orthogonality, domain-orthogonality or zero-TRO-products from \autoref{prp:CharOrthPres}, an analogous argument shows that 
$\Phi$ preserves range-orthogonality, domain-orthogonality, or zero-TRO-prod\-ucts, whenever $\varphi$ does.

Lastly, assume that $\varphi$ preserves orthogonality.
Recall that the triple product in a \ca{} is defined as $\{a,b,c\}=\tfrac{1}{2}(ab^*c+cb^*a)$, and that a bounded linear map~$\theta$ between \ca{s} is a \emph{triple homomorphism} if $\theta(\{a,b,c\})=\{\theta(a),\theta(b),\theta(c)\}$ for all $a,b,c$ in the domain of $\theta$.
Equivalently, $\theta$ satisfies
\[
\theta(ab^*c+cb^*a)
= \theta(a)\theta(b)^*\theta(c)+\theta(c)\theta(b)^*\theta(a)
\]
for all $a,b,c$.
Using an argument similar to the one above, it follows that the bitranspose $\theta^{**}$ is a triple homomorphism as well.

By \cite[Lemma~1]{BurFerGarMarPer08OrthPresJB}, two elements $a,b$ in a \ca{} are orthogonal if and only if $\{a,a,b\}=0$.
It follows that every triple homomorphism preserves orthogonality.

Set $h:=\Phi(1)\in B^{**}$.
Let $h=v|h|$ be the polar decomposition of $h$.
The partial isometry $v$ is also called the \emph{range tripotent} of $h$, and it is 
denoted by $r(h)$ in \cite{BurFerGarMarPer08OrthPresJB}.
By \cite[Theorem~17]{BurFerGarMarPer08OrthPresJB}, there exists a triple homomorphism $\pi\colon A\to B^{**}$ such that
\[
\varphi(a)
= hv^*\pi(a)
= \pi(a)v^*h
\]
for all $a\in A$.
Set $\Pi:=\pi^{**}\colon A^{**}\to B^{**}$, which is readily checked to be a triple 
homomorphism
It follows that
\[
\Phi(a)
= hv^*\Pi(a)
= \Pi(a)v^*h
\]
for all $a\in A^{**}$.

To show that $\Phi$ preserves orthogonality, let $a,b\in A^{**}$ be orthogonal, that is, $ab^*=a^*b=0$.
Since $\Pi$ is a triple homomorphism and therefore preserves orthogonality, we get $\Pi(a)\Pi(b)^*=\Pi(a)^*\Pi(b)=0$.
Using this at the last steps, we get
\[
\Phi(a)\Phi(b)^*
= (hv^*\Pi(a))(hv^*\Pi(b))^*
= hv^*\Pi(a)\Pi(b)^*vh^*
= 0
\]
and
\[
\Phi(a)^*\Phi(b)
= (\Pi(a)v^*h)^*(\Pi(b)v^*h)
= h^*v\Pi(a)^*\Pi(b)v^*h
= 0,
\]
as desired.
\end{proof}

In the next result, we use $\otimes$ to denote the minimal tensor product of \ca{s}.
We note that \autoref{prp:TensProd} also holds for maximal tensor products, with essentially the same proof.

\begin{prp}
\label{prp:TensProd}
Let $\varphi\colon A\to B$ and $\psi\colon C\to D$ be bounded, linear maps between \ca{s}.
If $\varphi$ and $\psi$ preserve zero-products (range-orthogonality, domain-orthogonality, zero-TRO-products), then so does the tensor product map $\varphi\otimes\psi\colon A\otimes C\to B\otimes D$.
\end{prp}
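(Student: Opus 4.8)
The plan is to derive everything from the algebraic characterizations of \autoref{prp:CharOrthPres} by a routine density argument. Recall that the algebraic tensor product $A\odot C$ is a dense $\ast$-subalgebra of $A\otimes C$, on which $\varphi\otimes\psi$ acts by $a\otimes c\mapsto\varphi(a)\otimes\psi(c)$, and that $\varphi\otimes\psi$ is a bounded linear map (as in the statement). For each of the four properties, \autoref{prp:CharOrthPres} rephrases ``$\varphi\otimes\psi$ preserves it'' as a multilinear identity in the map $\varphi\otimes\psi$ that must be verified for all tuples of elements of $A\otimes C$. Both sides of each of these identities are norm-continuous and additive in each variable separately, so it suffices to verify the identity for tuples in $A\odot C$, and then, by additivity, only for tuples of elementary tensors.

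Take the case of zero-products. By \autoref{prp:CharOrthPres}(1) we must check that $(\varphi\otimes\psi)(xy)(\varphi\otimes\psi)(z)=(\varphi\otimes\psi)(x)(\varphi\otimes\psi)(yz)$, and by the reduction above we may take $x=a_1\otimes c_1$, $y=a_2\otimes c_2$, $z=a_3\otimes c_3$. Then the left-hand side equals $\varphi(a_1a_2)\varphi(a_3)\otimes\psi(c_1c_2)\psi(c_3)$ and the right-hand side equals $\varphi(a_1)\varphi(a_2a_3)\otimes\psi(c_1)\psi(c_2c_3)$; these coincide because $\varphi$ and $\psi$ satisfy the identity of \autoref{prp:CharOrthPres}(1). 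The remaining three cases are handled identically, using parts (2), (3) and (4) of \autoref{prp:CharOrthPres}, together with $(a\otimes c)^*=a^*\otimes c^*$ and $(\varphi\otimes\psi)(a\otimes c)^*=\varphi(a)^*\otimes\psi(c)^*$ to deal with the involution. For instance, for range-orthogonality one checks on elementary tensors that both $(\varphi\otimes\psi)(y^*x)^*(\varphi\otimes\psi)(z)$ and $(\varphi\otimes\psi)(x)^*(\varphi\otimes\psi)(yz)$ equal $\varphi(a_1)^*\varphi(a_2a_3)\otimes\psi(c_1)^*\psi(c_2c_3)$, using \autoref{prp:CharOrthPres}(2) for $\varphi$ and $\psi$ with $(a,b,c)=(a_1,a_2,a_3)$; and for domain-orthogonality and zero-TRO-products one uses \autoref{prp:CharOrthPres}(3) and (4) with the substitutions $(a,b,c)=(a_1,a_2,a_3)$ and $(a,b,c,d,e)=(a_1,a_2,a_3,a_4,a_5)$, respectively.

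I do not expect any genuine obstacle here: the proof is purely formal once \autoref{prp:CharOrthPres} is in hand. The only delicate points are that $\varphi\otimes\psi$ is a well-defined bounded map (which is built into the statement of the proposition) and the notational bookkeeping in the five-variable identity of \autoref{prp:CharOrthPres}(4), where the element $d^*cb^*$ and the various adjoints have to be matched correctly on each tensor leg; once the elementary-tensor computation is written out, separate continuity in each variable together with density of $A\odot C$ finishes the argument. Finally, the same argument applies word for word to the maximal tensor product, since $A\odot C$ is again dense in $A\otimes_{\max}C$ and multiplication and the involution are continuous there.
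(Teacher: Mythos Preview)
Your argument is correct and is essentially the paper's proof: both reduce, via the algebraic identities of \autoref{prp:CharOrthPres}, to a computation on the algebraic tensor product followed by a density/continuity extension, and the verification on elementary tensors is exactly what the paper carries out (written there for finite sums of simple tensors rather than invoking additivity in each variable). Your remark on the maximal tensor product also matches the paper's observation preceding the proposition.
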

\begin{proof}
We prove the results for zero-product preserving maps.
The statements for range-orthogonality and domain-orthogonality preserving maps are shown analogously.

Assume that $\varphi$ and $\psi$ preserve zero-products and set $\alpha:=\varphi\otimes\psi\colon A\odot B\to C\odot D$.
Using that $\varphi$ and $\psi$ satisfy the formula from \autoref{prp:CharOrthPres}(1), we verify that $\alpha$ satisfies the formula as well.
Let 
\[
a=\sum_j u_j\otimes v_j, \quad
b=\sum_k w_k\otimes x_k, \andSep
c=\sum_l y_l\otimes z_l
\]
be finite sums of simple tensors in $A\otimes B$.
Then
\begin{align*}
\alpha(ab)\alpha(c)
&=\Big( \varphi\otimes\psi\Big( \sum_{j,k} u_jw_k\otimes v_jx_k \Big)\Big)\Big(\varphi\otimes\psi\Big( \sum_l y_l\otimes z_l \Big)\Big) \\
&= \sum_{j,k,l}\varphi(u_jw_k)\varphi(y_l)\otimes\psi(v_jx_k)\psi(z_l) \\
&= \sum_{j,k,l}\varphi(u_j)\varphi(w_ky_l)\otimes\psi(v_j)\psi(x_kz_l) 
= \alpha(a)\alpha(bc).
\end{align*}

Using that finite sums of simple tensors are dense in $A\otimes B$, and that $\alpha$ is continuous, it follows that $\alpha(ab)\alpha(c)=\alpha(a)\alpha(bc)$ for all $a,b,c\in A\otimes B$.
Applying \autoref{prp:CharOrthPres}(1), it follows that $\alpha$ preserves zero-products.
\end{proof}

The next result shows that every zero-product preserving map is automatically `completely zero-product preserving' (and similarly for range-orthogonality preserving, domain-orthogonality preserving and zero-TRO-product preserving).
For orthogonality-preserving maps, this is not the case:
the transpose map on $M_2(\mathbb{C})$ is orthogonality-preserving, but its amplifications are not.

\begin{cor}
\label{prp:CompletelyOrthPres}
Let $\varphi\colon A\to B$ be a bounded, linear map between \ca{s}.
If~$\varphi$ preserves zero-products (range-orthogonality, domain-orthogonality, zero-TRO-product), then so does the amplification $\varphi^{(n)}\colon M_n(A)\to M_n(B)$ for every $n\geq 1$.
\end{cor}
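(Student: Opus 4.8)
The plan is to deduce this from \autoref{prp:TensProd} by recognizing the amplification as a tensor product map. Since $M_n(\mathbb{C})$ is finite-dimensional, hence nuclear, there is a canonical isomorphism $M_n(A)\cong A\otimes M_n(\mathbb{C})$ of \ca{s} (the minimal tensor product), and under this identification the amplification $\varphi^{(n)}\colon M_n(A)\to M_n(B)$ corresponds to the tensor product map $\varphi\otimes\mathrm{id}_{M_n(\mathbb{C})}\colon A\otimes M_n(\mathbb{C})\to B\otimes M_n(\mathbb{C})$.

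First I would record the trivial observation that the identity map $\mathrm{id}_{M_n(\mathbb{C})}$ preserves zero-products, range-orthogonality, domain-orthogonality, and zero-TRO-products. More generally, any $\ast$-homomorphism $\pi$ satisfies $\pi(a)\pi(b)=\pi(ab)$, $\pi(a)^*\pi(b)=\pi(a^*b)$, $\pi(a)\pi(b)^*=\pi(ab^*)$, and $\pi(a)\pi(b)^*\pi(c)=\pi(ab^*c)$, so it preserves all four notions; in particular $\mathrm{id}_{M_n(\mathbb{C})}$ does.

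Then, assuming $\varphi$ preserves one of these four notions of orthogonality, applying \autoref{prp:TensProd} to $\varphi$ and $\psi:=\mathrm{id}_{M_n(\mathbb{C})}$ shows that $\varphi\otimes\mathrm{id}_{M_n(\mathbb{C})}$ preserves the same notion. Transporting this conclusion back along the isomorphism $M_n(A)\cong A\otimes M_n(\mathbb{C})$ yields the assertion for $\varphi^{(n)}$.

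I do not anticipate any genuine obstacle; the only point meriting a moment's care is the (standard) identification of $\varphi^{(n)}$ with $\varphi\otimes\mathrm{id}_{M_n(\mathbb{C})}$. As an alternative that bypasses \autoref{prp:TensProd}, one could instead verify directly, using the entrywise formula for matrix multiplication, that $\varphi^{(n)}$ satisfies the relevant algebraic identity of \autoref{prp:CharOrthPres} whenever $\varphi$ does; but the tensor-product argument is the shorter route.
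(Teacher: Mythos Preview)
Your proposal is correct and follows exactly the same approach as the paper: apply \autoref{prp:TensProd} with $C=D=M_n(\mathbb{C})$ and $\psi=\mathrm{id}_{M_n}$, using the standard identification $M_n(A)\cong A\otimes M_n(\mathbb{C})$. The paper's proof is a single sentence to this effect, so your additional remarks about $\ast$-homomorphisms preserving all four notions and the alternative entrywise verification are just extra commentary.
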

\begin{proof}
This follows by taking $C=D=M_n$ and $\psi=\mathrm{id}_{M_n}$ in \autoref{prp:TensProd}.
\end{proof}

Some of the statements in this section do not involve the adjoint operation in a \ca, and therefore make sense in more general settings. 
It would thus be interesting to find other classes of Banach algebras for which part~(1) of~\autoref{prp:CharOrthPres} or \autoref{prp:CompletelyOrthPres} are true. 
For example, one could explore these questions in the context of $L^p$-operator algebras \cite{Gar21ModernLp}, or at least for the well-behaved class arising from groups as in \cite{GarThi15GpAlgLp, GarThi19ReprConvLq}.

\section{Constructing the support \texorpdfstring{$\ast$}{*}-homomorphisms}
\label{sec:support} 

Throughout this section, we assume that $\varphi\colon A\to B$ is a bounded, linear map between \ca{s} that preserves zero-products, range-orthogonality, and also domain-orthogonality. By 
\autoref{prp:CharOrthPres}, for all $a,b,c\in A$ we have
\begin{align}
\label{eq:ZP}\tag{3.1}
& \varphi(ab)\varphi(c)=\varphi(a)\varphi(bc); \\
\label{eq:RO}\tag{3.2}
& \varphi(b^*a)^*\varphi(c)=\varphi(a)^*\varphi(bc); \\
\label{eq:DO}\tag{3.3}
& \varphi(ab)\varphi(c)^*=\varphi(a)\varphi(cb^*)^*.
\end{align}
For later use, we observe that the \emph{adjoint} $\varphi^*\colon A\to B$ of $\varphi$,
defined by $\varphi^*(a)=\varphi(a^*)^*$ for all $a\in A$, also 
preserves zero-products, range-orthogonality, and also domain-orthogonality.
We fix a representation $B\subseteq\Bdd(H)$ of $B$ on some Hilbert space.
Our goal is to construct a $\ast$-homomorphism $\pi_\varphi\colon A\to B''\subseteq\Bdd(H)$ such that
\[
\pi_\varphi(a)\varphi(b) = \varphi(ab) = \varphi(a)\pi_\varphi(b)
\]
for all $a,b\in A$.
Using this, we will show in \autoref{prp:CharWeighted} that $\varphi$ is a weighted $\ast$-homomorphism
in the sense of \autoref{dfn:WeightedStarHomo}. 

We begin the construction by setting
\[
H_0:=\mathrm{span} \left( \big\{\varphi(a)\xi:a\in A, \xi\in H\big\} \cup \big\{\varphi(a)^*\xi:a\in A, \xi\in H\big\} \right).
\]
Given $a\in A$, we first show that there exists a unique operator $\pi_\varphi(a)\in\Bdd(H)$ such that
\begin{align}\tag{3.4}
\label{eq:define-pi-a}
\pi_\varphi(a)\varphi(b)\xi = \varphi(ab)\xi, \andSep
\pi_\varphi(a)\varphi(c)^*\eta = \varphi(ca^*)^*\eta
\end{align}
for all $b,c\in A$ and $\xi,\eta\in H$, and such that $\pi_\varphi(a)\zeta=0$ for all $\zeta\in H_0^\perp$.

\begin{lma}
\label{prp:pi-a-bounded}
Let $J$ and $K$ be finite index sets, let $b_j\in A$ and $\xi_j\in H$ for $j\in J$, and let $c_k\in A$ and $\eta_k\in H$ for $k\in K$.
Then
\begin{align*}
\left\| \sum_{j\in J} \varphi(ab_j)\xi_j + \sum_{k\in K} \varphi(c_ka^*)^*\eta_k \right\| 
\leq \| a \| \left\| \sum_{j\in J} \varphi(b_{j})\xi_{j} + \sum_{k\in K} \varphi(c_{k})^*\eta_{k} \right\|
\end{align*}
for all $a\in A$.
\end{lma}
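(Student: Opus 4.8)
To estimate the norm of a vector $\sum_j \varphi(ab_j)\xi_j + \sum_k \varphi(c_ka^*)^*\eta_k$, I would pass to inner products and expand $\|\cdot\|^2$, so that the inequality to prove becomes a statement about the nine types of terms $\langle \varphi(ab_j)\xi_j, \varphi(ab_{j'})\xi_{j'}\rangle$, $\langle \varphi(ab_j)\xi_j, \varphi(c_ka^*)^*\eta_k\rangle$, and $\langle \varphi(c_ka^*)^*\eta_k, \varphi(c_{k'}a^*)^*\eta_{k'}\rangle$. The key algebraic input is that each of these can be rewritten, using \eqref{eq:ZP}, \eqref{eq:RO}, \eqref{eq:DO}, so that the ``$a$-dependence'' is stripped to a single factor of $a^*a$ (or $aa^*$) sandwiched in the middle. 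Concretely: $\varphi(ab_j)^*\varphi(ab_{j'}) = \varphi(b_j)^* \varphi(b_{j'}^* a^* a b_{j'}) / \dots$—wait, more carefully, using \eqref{eq:RO} with the substitution that turns $\varphi(xy)^*\varphi(z)$ into $\varphi(y)^*\varphi(x^*z)$, one gets $\varphi(ab_j)^*\varphi(ab_{j'}) = \varphi(b_j)^*\varphi((ab_j)^* \cdot \text{stuff})$; the point is that after one application it reads $\varphi(b_j)^*\varphi(b_j^* a^* a b_{j'})$, and then a further application of \eqref{eq:ZP}-type identities factors this as something one can bound by $\|a\|^2$ times the ``$a$-free'' analogue. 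The cross terms mixing a $\varphi(ab_j)$ with a $\varphi(c_ka^*)^*$ are handled using \eqref{eq:DO} in the same spirit.

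**Implementing the estimate.** The cleanest way to organize this is probably to introduce the auxiliary vector $v := \sum_j \varphi(b_j)\xi_j + \sum_k \varphi(c_k)^*\eta_k \in H$ and the vector $v_a := \sum_j \varphi(ab_j)\xi_j + \sum_k \varphi(c_ka^*)^*\eta_k$, and to show that for every $x \in A$ one has a bilinear-form identity $\langle v_a, v_x\rangle = \langle (\text{operator built from } x^*a) v, v\rangle$ in a way that exhibits $a \mapsto v_a$ as (the restriction to a generating set of) a bounded module map. Even more directly: I would show $\|v_a\|^2 = \langle v_{a^*a}, v\rangle$—that is, $\langle v_a, v_a\rangle = \langle v_{a^*a}, v\rangle$—purely from \eqref{eq:ZP}, \eqref{eq:RO}, \eqref{eq:DO} (this is the heart of the computation: each of the nine inner-product terms on the left equals the corresponding term on the right after one or two rewrites). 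Granting this identity, Cauchy--Schwarz gives $\|v_a\|^2 = \langle v_{a^*a}, v\rangle \le \|v_{a^*a}\|\,\|v\|$, and then iterating (replacing $a$ by $a^*a$, which is self-adjoint so $(a^*a)^*(a^*a) = (a^*a)^2$) yields $\|v_a\|^2 \le \|v_{(a^*a)^2}\|^{1/2}\|v\|^{3/2} \le \dots$, a standard telescoping that in the limit gives $\|v_a\|^2 \le r(a^*a)\,\|v\|^2 = \|a\|^2\|v\|^2$, where $r$ denotes spectral radius; taking square roots is the claim. (One must check $\|v_{(a^*a)^n}\| \le \|(a^*a)^n\|\,\|v\| + \dots$ stays controlled, but boundedness of $\varphi$ makes the error terms in the recursion vanish in the limit; alternatively just use that $\|v_{a^*a}\| \le \|a^*a\|\|v\|$ fails to be available a priori, so the spectral-radius trick via $\|v_{b}\|^2 \le \|v_{b^*b}\|\|v\|$ applied repeatedly to $b = a^*a$ is exactly what is needed.)

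**The main obstacle.** The real work is the bookkeeping in establishing $\langle v_a, v_a\rangle = \langle v_{a^*a}, v\rangle$: one has to apply the three identities \eqref{eq:ZP}--\eqref{eq:DO} to each of the nine species of terms and check that everything lands on the nose. For instance $\langle\varphi(ab_j)\xi_j, \varphi(c_k a^*)^*\eta_k\rangle = \langle \varphi(c_k a^*)\varphi(ab_j)\xi_j, \eta_k\rangle$, and $\varphi(c_k a^*)\varphi(ab_j)$ should be rewritten—via \eqref{eq:ZP} read as $\varphi(xy)\varphi(z) = \varphi(x)\varphi(yz)$ with $x = c_k$, $y = a^*$, $z = ab_j$—as $\varphi(c_k)\varphi(a^* a b_j)$, which is precisely $\langle \varphi(a^*a\, b_j)\xi_j$ paired against $\varphi(c_k)^*\eta_k\rangle$, a term of $\langle v_{a^*a}, v\rangle$. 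The diagonal terms $\langle\varphi(ab_j)\xi_j,\varphi(ab_{j'})\xi_{j'}\rangle = \langle\varphi(ab_{j'})^*\varphi(ab_j)\xi_j,\xi_{j'}\rangle$ use \eqref{eq:RO} in the form $\varphi(y^*x)^*\varphi(z) = \varphi(x)^*\varphi(yz)$: writing $\varphi(ab_{j'})^*\varphi(ab_j) = \varphi((b_{j'}^* a^*)^\ast{}^* \cdots)$, after the rewrite this becomes $\varphi(b_{j'})^*\varphi((ab_{j'})^* a b_j) = \varphi(b_{j'})^*\varphi(b_{j'}^* a^* a b_j)$, matching the corresponding diagonal term of $\langle v_{a^*a}, v\rangle$. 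Once all nine are verified, the analytic part (Cauchy--Schwarz plus the spectral-radius iteration) is routine. A mild subtlety: approximate-identity arguments may be needed to remove issues at the boundary between ``$\varphi$ applied to a product'' and the formulas, but since we already know from \autoref{prp:CharOrthPres} that the identities \eqref{eq:ZP}--\eqref{eq:DO} hold for \emph{all} $a,b,c \in A$ without any nondegeneracy hypothesis, no such fussing should be required.
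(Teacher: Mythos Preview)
Your approach is exactly the one the paper takes: establish the identity $\langle v_a, v_a\rangle = \langle v_{a^*a}, v\rangle$ term-by-term using \eqref{eq:ZP}--\eqref{eq:DO} (this is the paper's Claim~1), apply Cauchy--Schwarz, then iterate with $(a^*a)^{2^n}$ and use the $C^*$-identity $\|(a^*a)^{2^n}\|^{1/2^n}=\|a\|^2$ together with boundedness of $\varphi$ to pass to the limit (the paper's Claim~2). Two cosmetic slips: there are four, not nine, species of cross terms (two kinds of summands in each slot), and in your diagonal computation $\varphi(ab_{j'})^*\varphi(ab_j)=\varphi(b_{j'})^*\varphi(a^*ab_j)$ there should be no extra $b_{j'}^*$ inside the second argument---but these do not affect the argument.
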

\begin{proof}
We first establish the following

\textbf{Claim 1:} \emph{Given $a\in A$, we have
\begin{align*}
&\ \ \ \ \ \ \ \ \ \ \ \ \ \ \ \ \ \ \ 
\Big\| \sum_j \varphi(ab_j)\xi_j + \sum_k \varphi(c_ka^*)^*\eta_k \Big\|^2 \\
&\quad \leq \Big\| \sum_j \varphi(a^*ab_j)\xi_j + \sum_k \varphi(c_ka^*a)^*\eta_k \Big\|
\Big\| \sum_{j} \varphi(b_{j})\xi_{j} + \sum_{k} \varphi(c_{k})^*\eta_{k} \Big\|.
\end{align*}
}

To prove the claim, let $j\in J$ and $k'\in K$. 
Then
\begin{align*}
\big\langle \varphi(ab_j)\xi_j, \varphi(c_{k'}a^*)^*\eta_{k'} \big\rangle 
&\ = \ \big\langle \varphi(c_{k'}a^*)\varphi(ab_j)\xi_j, \eta_{k'} \big\rangle \\
&\stackrel{\eqref{eq:ZP}}{=}  \big\langle \varphi(c_{k'})\varphi(a^*ab_j)\xi_j, \eta_{k'} \big\rangle \\
&\ = \ \big\langle \varphi(a^*ab_j)\xi_j, \varphi(c_{k'})^*\eta_{k'} \big\rangle.
\end{align*}

Similarly, for all $j,j'\in J$ we obtain
\begin{align*}
\big\langle \varphi(ab_j)\xi_j, \varphi(ab_{j'})\xi_{j'} \big\rangle
&\ = \ \big\langle \varphi(ab_{j'})^*\varphi(ab_j)\xi_j, \xi_{j'} \big\rangle \\
&\stackrel{\eqref{eq:RO}}{=} \big\langle \varphi(b_{j'})^*\varphi(a^*ab_j)\xi_j, \xi_{j'} \big\rangle \\
&\ = \ \big\langle \varphi(a^*ab_j)\xi_j, \varphi(b_{j'})\xi_{j'} \big\rangle.
\end{align*}

Also, for all $k,k'\in K$ we have
\begin{align*}
\big\langle \varphi(c_ka^*)^*\eta_k, \varphi(c_{k'}a^*)^*\eta_{k'} \big\rangle 
&\ = \ \big\langle \varphi(c_{k'}a^*)\varphi(c_ka^*)^*\eta_k, \eta_{k'} \big\rangle \\
&\stackrel{\eqref{eq:DO}}{=}  \big\langle \varphi(c_{k'})\varphi(c_ka^*a)^*\eta_k, \eta_{k'} \big\rangle \\
&\ = \ \big\langle \varphi(c_ka^*a)^*\eta_k, \varphi(c_{k'})^*\eta_{k'} \big\rangle.
\end{align*}

Further, for each $k\in K$ and $j'\in J$ we get
\begin{align*}
\big\langle \varphi(c_ka^*)^*\eta_k, \varphi(ab_{j'})\xi_{j'} \big\rangle
= \big\langle \varphi(c_ka^*a)^*\eta_k, \varphi(b_{j'})\xi_{j'} \big\rangle.
\end{align*}

Using all of these equalities at the third step, and using the Cauchy-Schwarz inequality at the last step, we get
\begin{align*}
&\Big\| \sum_j \varphi(ab_j)\xi_j + \sum_k \varphi(c_ka^*)^*\eta_k \Big\|^2 \\
&\quad= \left\langle \sum_j \varphi(ab_j)\xi_j + \sum_k \varphi(c_ka^*)^*\eta_k, \sum_{j'} \varphi(ab_{j'})\xi_{j'} + \sum_{k'} \varphi(c_{k'}a^*)^*\eta_{k'} \right\rangle \\
&\quad= \sum_{j,j'} \left\langle \varphi(ab_j)\xi_j, \varphi(ab_{j'})\xi_{j'} \right\rangle
+ \sum_{j,k'} \left\langle \varphi(ab_j)\xi_j, \varphi(c_{k'}a^*)^*\eta_{k'} \right\rangle \\
&\qquad\qquad + \sum_{k,j'} \left\langle \varphi(c_ka^*)^*\eta_k, \varphi(ab_{j'})\xi_{j'} \right\rangle
+ \sum_{k,k'} \left\langle \varphi(c_ka^*)^*\eta_k, \varphi(c_{k'}a^*)^*\eta_{k'} \right\rangle \\
&\quad= \sum_{j,j'} \left\langle \varphi(a^*ab_j)\xi_j, \varphi(b_{j'})\xi_{j'} \right\rangle
+ \sum_{j,k'} \left\langle \varphi(a^*ab_j)\xi_j, \varphi(c_{k'})^*\eta_{k'} \right\rangle \\
&\qquad\qquad + \sum_{k,j'} \left\langle \varphi(c_ka^*a)^*\eta_k, \varphi(b_{j'})\xi_{j'} \right\rangle
+ \sum_{k,k'} \left\langle \varphi(c_ka^*a)^*\eta_k, \varphi(c_{k'})^*\eta_{k'} \right\rangle \\
&\quad= \left\langle \sum_j \varphi(a^*ab_j)\xi_j + \sum_k \varphi(c_ka^*a)^*\eta_k, \sum_{j'} \varphi(b_{j'})\xi_{j'} + \sum_{k'} \varphi(c_{k'})^*\eta_{k'} \right\rangle \\
&\quad\leq \Big\| \sum_j \varphi(a^*ab_j)\xi_j + \sum_k \varphi(c_ka^*a)^*\eta_k \Big\|
\Big\| \sum_{j} \varphi(b_{j})\xi_{j} + \sum_{k} \varphi(c_{k})^*\eta_{k} \Big\|.
\end{align*}
This proves the claim.

Fix $a\in A$ for the rest of the proof. We set
\[
C= \Big\| \sum_j \varphi(ab_j)\xi_j + \sum_k \varphi(c_ka^*)^*\eta_k \Big\| \ \text{and}\
D= \Big\| \sum_{j} \varphi(b_{j})\xi_{j} + \sum_{k} \varphi(c_{k})^*\eta_{k} \Big\|.
\]
For $n\in\mathbb{N}$, we denote by $I(n)$ the (potentially false) expression
\begin{align*}
\label{eq:pi-a-bounded:estimte}\tag*{$I(n)$}
C^2 \leq \Big\| \sum_j \varphi((a^*a)^{2^n}b_j)\xi_j + \sum_k \varphi(c_ka^*a)^*\eta_k \Big\|^{\tfrac{1}{2^n}}
D^{2-\tfrac{1}{2^n}}.
\end{align*}

\textbf{Claim~2:} \emph{$I(n)$ is true for all $n\in\mathbb{N}$.}
We will prove the claim by induction.
Applying Claim~1, we get
\begin{align*}
&C^2  = \left\| \sum_j \varphi(ab_j)\xi_j + \sum_k \varphi(c_ka^*)^*\eta_k \right\|^2 \\
&\quad \leq \left\| \sum_j \varphi(a^*ab_j)\xi_j + \sum_k \varphi(c_ka^*a)^*\eta_k \right\|
\left\| \sum_{j} \varphi(b_{j})\xi_{j} + \sum_{k} \varphi(c_{k})^*\eta_{k} \right\| \\
&\quad  = \left\| \sum_j \varphi(a^*ab_j)\xi_j + \sum_k \varphi(c_ka^*a)^*\eta_k \right\|^{\tfrac{1}{2^0}} D^{2-\tfrac{1}{2^0}}.
\end{align*}
In other words, $I(0)$ holds.
For the induction step, assume that $I(n)$ is true for some $n\geq 0$.
Applying Claim~1 with the self-adjoint element $(a^*a)^{2^n}$ in place of $a$, we obtain
\begin{align*}
&\left\| \sum_j \varphi((a^*a)^{2^n}b_j)\xi_j + \sum_k \varphi(c_k(a^*a)^{2^n})^*\eta_k \right\| \\
&\qquad \leq \left\| \sum_j \varphi((a^*a)^{2^{n+1}}b_j)\xi_j + \sum_k \varphi(c_k(a^*a)^{2^{n+1}})^*\eta_k \right\|^{\tfrac{1}{2}} D^{\tfrac{1}{2}}.
\end{align*}
Using this at the second step, we obtain
\begin{align*}
C^2 &\stackrel{I(n)}{\leq} \left\| \sum_j \varphi((a^*a)^{2^n}b_j)\xi_j + \sum_k \varphi(c_k(a^*a)^{2^n})^*\eta_k \right\|^{\tfrac{1}{2^n}} D^{2-\tfrac{1}{2^n}} \\
&\leq \left\| \sum_j \varphi((a^*a)^{2^{n+1}}b_j)\xi_j + \sum_k \varphi(c_k(a^*a)^{2^{n+1}})^*\eta_k \right\|^{\tfrac{1}{2^{n+1}}} D^{\tfrac{1}{2^{n+1}}}D^{2-\tfrac{1}{2^n}},
\end{align*} 
which shows that $I(n+1)$ is true. 
This completes the induction and proves the claim.

Now, for every $n\geq 0$, we get
\begin{align*}
C^2 &\stackrel{I(n)}{\leq} \left\| \sum_j \varphi((a^*a)^{2^n}b_j)\xi_j + \sum_k \varphi(c_ka^*a)^*\eta_k \right\|^{\tfrac{1}{2^n}} D^{2-\tfrac{1}{2^n}} \\
&\leq \left( \sum_j \|\varphi\|\|\xi_j\| + \sum_k \|\varphi\|\|\eta_k\| \right)^{\tfrac{1}{2^n}} 
\|(a^*a)^{2^n}\|^{\tfrac{1}{2^n}} D^{2-\tfrac{1}{2^n}}.
\end{align*}
Using that $\|(a^*a)^{2^n}\|^{\tfrac{1}{2^n}}=\|a\|^2$, and taking to the limit as $n\to\infty$, we deduce that $C^2 \leq \|a\|^2 D^2$, and so $C\leq\|a\|D$, as desired.
\end{proof}

Given $a\in A$, it follows from \autoref{prp:pi-a-bounded} that the map $H_0\to H_0$, given by
\[
\sum_j\varphi(b_j)\xi_j + \sum_k\varphi(c_k)^*\eta_k 
\mapsto \sum_j \varphi(ab_j)\xi_j + \sum_k \varphi(c_ka^*)^*\eta_k
\]
is well-defined, bounded and linear, and therefore extends to a bounded, linear map $\pi^{(0)}_\varphi(a)\colon \overline{H_0}\to\overline{H_0}$ satisfying $\|\pi^{(0)}_\varphi(a)\|\leq \|a\|$ for all $a\in A$.
We define $\pi_\varphi(a)\in\Bdd(H)$ by $\pi_\varphi(a)\xi:=\pi^{(0)}_\varphi(a)\xi$ for $\xi\in\overline{H_0}$ and $\pi_\varphi(a)\eta:=0$ for $\eta\in\overline{H_0}^\perp$.
We note that $\pi_\varphi$ is determined by
\begin{align}\label{eqn:Defpi}\tag{3.5}
\pi_\varphi(a)\varphi(b)\xi=\varphi(ab)\xi \ \ \mbox{ and } \ \ 
 \pi_\varphi(a)\varphi(b)^*\xi=\varphi(ba^*)^*\xi 
\end{align}
for all $a,b\in A$ and all $\xi\in H$. Moreover, one readily checks that $\pi_\varphi=\pi_{\varphi^*}$.

\begin{prp}
\label{prp:formulasPiPhiSmall}
Let $A$ and $B$ be \ca s with $B\subseteq \Bdd(H)$, and let $\varphi\colon A\to B$ be a bounded, linear map between \ca{s} that preserves zero-products, range-orthogonality, and also domain-orthogonality.
Denote by $\pi_\varphi\colon A\to\Bdd(H)$ the canonical bounded, linear map defined in the preceding comments. 
Then:
\begin{enumerate} 
\item 
The map $\pi_\varphi$ is a $\ast$-homomorphism.
\item 
The image of $\pi_\varphi$ is contained in $B''\subseteq \Bdd(H)$.
\item 
For all $a,b\in A$, we have
\[
\pi_\varphi(a)\varphi(b)=\varphi(ab)=\varphi(a)\pi_\varphi(b) \andSep
\pi_\varphi(a)\varphi(b)^*=\varphi(ba^*)^*=\varphi(a^*)^*\pi_\varphi(b^*).
\]
\end{enumerate}
\end{prp}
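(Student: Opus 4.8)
The plan is to establish the three assertions in order, since each builds on the previous one. First I would record the basic identities that come directly from the construction. From \eqref{eqn:Defpi} and linearity it is immediate that $\pi_\varphi(a)\varphi(b) = \varphi(ab)$ and $\pi_\varphi(a)\varphi(b)^* = \varphi(ba^*)^*$ as operators on $H$, because both sides agree on the dense subspace $\overline{H_0}$ (they agree on each generating vector by \eqref{eqn:Defpi}) and both sides kill $\overline{H_0}^\perp$ — the left side by definition of $\pi_\varphi$, and the right side because $\varphi(ab)H, \varphi(ba^*)^*H \subseteq H_0$. The first half of part~(3) is then the identity $\pi_\varphi(a)\varphi(b) = \varphi(ab)$ itself; for the other half, $\varphi(a)\pi_\varphi(b) = \varphi(ab)$, I would test against an arbitrary vector. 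On a generating vector of $H_0$ of the form $\varphi(c)\xi$ we compute $\varphi(a)\pi_\varphi(b)\varphi(c)\xi = \varphi(a)\varphi(bc)\xi = \varphi(ab)\varphi(c)\xi$ using \eqref{eq:ZP}; on a generating vector $\varphi(c)^*\xi$ we get $\varphi(a)\pi_\varphi(b)\varphi(c)^*\xi = \varphi(a)\varphi(cb^*)^*\xi = \varphi(ab)\varphi(c)^*\xi$ using \eqref{eq:DO}; and on $\overline{H_0}^\perp$ both sides vanish since $\pi_\varphi(b)$ does. The last two identities in part~(3) follow symmetrically using $\pi_\varphi = \pi_{\varphi^*}$ together with the already-noted fact that $\varphi^*$ also preserves the three orthogonalities.

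For part~(1), I would check multiplicativity, linearity, and $\ast$-preservation of $\pi_\varphi$. Linearity is clear from the defining formula. For multiplicativity, I would show $\pi_\varphi(a)\pi_\varphi(a') = \pi_\varphi(aa')$ by again testing on generating vectors of $H_0$: on $\varphi(b)\xi$ one has $\pi_\varphi(a)\pi_\varphi(a')\varphi(b)\xi = \pi_\varphi(a)\varphi(a'b)\xi = \varphi(aa'b)\xi = \pi_\varphi(aa')\varphi(b)\xi$ straight from \eqref{eqn:Defpi}, and similarly on $\varphi(b)^*\xi$ using $\pi_\varphi(a)\varphi(ba'^*)^*\xi = \varphi(b a'^* a^*)^*\xi = \varphi(b(aa')^*)^*\xi$; both sides vanish on $\overline{H_0}^\perp$. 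For $\ast$-preservation, I would verify $\langle \pi_\varphi(a^*)u, v\rangle = \langle u, \pi_\varphi(a)v\rangle$ for $u, v$ ranging over the generating vectors of $H_0$ (and noting both operators annihilate $\overline{H_0}^\perp$, which handles the mixed cases). This reduces to four types of inner-product identities of the form $\langle \varphi(a^* b)\xi, \varphi(c)\eta\rangle = \langle \varphi(b)\xi, \varphi(ac)\eta\rangle$ and its variants; each is exactly one of \eqref{eq:ZP}, \eqref{eq:RO}, \eqref{eq:DO} (after moving operators across the inner product), so this is bookkeeping rather than a genuine difficulty.

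For part~(2), to see that $\pi_\varphi(a) \in B'' = B''$, I would show $\pi_\varphi(a)$ commutes with every $T \in B'$. Given such a $T$, it commutes with every $\varphi(b) \in B$ and with every $\varphi(b)^*$, hence $T$ leaves $H_0$ — and therefore $\overline{H_0}$ and $\overline{H_0}^\perp$ — invariant. On a generating vector $\varphi(b)\xi$ we compute $\pi_\varphi(a) T \varphi(b)\xi = \pi_\varphi(a)\varphi(b) T\xi = \varphi(ab) T\xi = T\varphi(ab)\xi = T\pi_\varphi(a)\varphi(b)\xi$; the case $\varphi(b)^*\xi$ is analogous, and on $\overline{H_0}^\perp$ both composites vanish. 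Hence $\pi_\varphi(a)T = T\pi_\varphi(a)$, so $\pi_\varphi(a) \in B''$.

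The only place that requires any care is making sure that every operator identity is checked both on $\overline{H_0}$ and on its orthogonal complement — the formulas \eqref{eqn:Defpi} pin down $\pi_\varphi$ only on $\overline{H_0}$, so at each step one must separately note that both sides of the claimed identity annihilate $\overline{H_0}^\perp$. I do not expect a substantive obstacle; all the algebra is a direct translation of \eqref{eq:ZP}, \eqref{eq:RO}, \eqref{eq:DO}, and the real content of the section was already carried out in \autoref{prp:pi-a-bounded}, which made $\pi_\varphi(a)$ well-defined and bounded in the first place.
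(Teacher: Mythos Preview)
Your approach mirrors the paper's proof almost exactly: test every operator identity on generating vectors of $H_0$ using \eqref{eq:ZP}, \eqref{eq:RO}, \eqref{eq:DO}, and separately handle $\overline{H_0}^\perp$. The organization (doing parts of (3) first, then (1), then (2)) differs slightly from the paper but is logically equivalent.

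There is one small slip worth fixing. You assert that $\varphi(ab)$ and $\varphi(ba^*)^*$ annihilate $\overline{H_0}^\perp$ ``because $\varphi(ab)H, \varphi(ba^*)^*H \subseteq H_0$'', but having \emph{range} in $H_0$ does not by itself give vanishing on $\overline{H_0}^\perp$. The correct reason is that the \emph{adjoint} has range in $H_0$: since $\varphi(ab)^*H\subseteq H_0$ by the very definition of $H_0$, we have $\ker\varphi(ab)=(\operatorname{ran}\varphi(ab)^*)^\perp\supseteq\overline{H_0}^\perp$. (The paper phrases this as $\|\varphi(ab)\eta\|^2=\langle\eta,\varphi(ab)^*\varphi(ab)\eta\rangle=0$ for $\eta\in\overline{H_0}^\perp$.) The same correction is needed in your verification of $\varphi(a)\pi_\varphi(b)=\varphi(ab)$: saying ``both sides vanish since $\pi_\varphi(b)$ does'' covers only the left-hand side; the right-hand side $\varphi(ab)$ needs the argument just described. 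Incidentally, for the identity $\pi_\varphi(a)\varphi(b)=\varphi(ab)$ no decomposition is needed at all, since \eqref{eqn:Defpi} already asserts equality for every $\xi\in H$. With this correction your proof is complete and matches the paper's.
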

\begin{proof} 
(1)
We first show that $\pi_\varphi$, which we will abbreviate to $\pi$ throughout in the proof of this proposition, is linear and multiplicative.
For $a_1,a_2$ and $\lambda\in\mathbb{C}$, we have
\begin{align*}
\pi(a_1+\lambda a_2)\varphi(b)\xi
&= \varphi\big( (a_1+\lambda a_2)b \big)\xi \\
&= \varphi(a_1b)\xi + \lambda \varphi(a_2b)\xi
= \big( \pi(a_1)+\lambda\pi(a_2) \big)\varphi(b)\xi
\end{align*}
and
\[
\pi(a_1)\pi(a_2)\varphi(b)\xi
= \pi(a_1)\varphi(a_2b)\xi
= \varphi(a_1a_2b)\xi
= \pi(a_1a_2)\varphi(b)\xi
\]
for all $b\in A$ and $\xi\in H$.
Similarly, we have
\begin{align*}
\pi(a_1+\lambda a_2)\varphi(c)^*\eta
&= \varphi( c(a_1+\lambda a_2)^* )^*\eta \\
&= \varphi(ca_1^*)^*\eta + \lambda \varphi(ca_2^*)^*\eta
= \big( \pi(a_1)+\lambda\pi(a_2) \big) \varphi(c)^*\eta
\end{align*}
and
\[
\pi(a_1)\pi(a_2)\varphi(c)^*\eta
= \pi(a_1)\varphi(ca_2^*)^*\eta
= \varphi(ba_2^*a_1^*)^*\eta
= \pi(a_1a_2)\varphi(c)^*\eta
\]
for all $c\in A$ and $\eta\in H$.
Using linearity and continuity of $\pi(a_1+\lambda a_2)$ and $\pi(a_1)+\lambda\pi(a_2)$, it follows that these operators agree on $\overline{H_0}$.
Since both operators vanish on $\overline{H_0}^\perp$, it follows that $\pi(a_1+\lambda a_2)=\pi(a_1)+\lambda\pi(a_2)$.
Similarly, we obtain $\pi(a_1)\pi(a_2)=\pi(a_1a_2)$.
Thus, $\pi$ is linear and multiplicative.

To show that $\pi$ preserves adjoints, let $a\in A$.
Given $b,b'\in A$ and $\xi,\xi'\in H$, using \eqref{eq:RO} at the fourth step, we get
\begin{align*}
\big\langle \pi(a)^*\varphi(b)\xi, \varphi(b')\xi' \big\rangle
&= \big\langle \varphi(b)\xi, \pi(a)\varphi(b')\xi' \big\rangle 
= \big\langle \varphi(b)\xi, \varphi(ab')\xi' \big\rangle \\
&= \big\langle \varphi(ab')^*\varphi(b)\xi, \xi' \big\rangle 
= \big\langle \varphi(b')^*\varphi(a^*b)\xi, \xi' \big\rangle \\
&= \big\langle \varphi(a^*b)\xi, \varphi(b')\xi' \big\rangle 
= \big\langle \pi(a^*)\varphi(b)\xi, \varphi(b')\xi' \big\rangle.
\end{align*}
Similarly, given $c,c'\in A$ and $\eta,\eta'\in H$, using \eqref{eq:DO} at the fourth step, we get
\begin{align*}
\big\langle \pi(a)^*\varphi(c)^*\eta, \varphi(c')^*\eta' \big\rangle
&= \big\langle \varphi(c)^*\eta, \pi(a)\varphi(c')^*\eta' \big\rangle 
= \big\langle \varphi(c)^*\eta, \varphi(c'a^*)^*\eta' \big\rangle \\
&= \big\langle \varphi(c'a^*)\varphi(c)^*\eta, \eta' \big\rangle 
= \big\langle \varphi(c')\varphi(ca)^*\eta, \eta' \big\rangle \\
&= \big\langle \varphi(ca)^*\eta, \varphi(c')^*\eta' \big\rangle 
= \big\langle \pi(a^*)\varphi(c)^*\eta, \varphi(c')^*\eta' \big\rangle.
\end{align*}

Further, given $b,c\in A$ and $\xi,\eta\in H$, using \eqref{eq:ZP} at the fourth step, we get
\begin{align*}
\big\langle \pi(a)^*\varphi(b)\xi, \varphi(c)^*\eta \big\rangle
&= \big\langle \varphi(b)\xi, \pi(a)\varphi(c)^*\eta \big\rangle 
= \big\langle \varphi(b)\xi, \varphi(ca^*)^*\eta \big\rangle \\
&= \big\langle \varphi(ca^*)\varphi(b)\xi, \eta \big\rangle 
= \big\langle \varphi(c)^*\varphi(a^*b)\xi, \eta \big\rangle \\
&= \big\langle \varphi(a^*b)\xi, \varphi(c)^*\eta \big\rangle 
= \big\langle \pi(a^*)\varphi(b)\xi, \varphi(c)^*\eta \big\rangle.
\end{align*}
and analogously
\[
\big\langle \pi(a)^*\varphi(c)^*\eta, \varphi(b)\xi \big\rangle
= \big\langle \pi(a^*)\varphi(c)^*\eta, \varphi(b)\xi \big\rangle.
\]

Using linearity and continuity of $\pi(a)^*$ and $\pi(a^*)$, we get
\[
\big\langle \pi(a)^* \alpha, \beta \big\rangle
= \big\langle \pi(a^*)\alpha, \beta\big\rangle
\]
for all $\alpha,\beta\in\overline{H_0}$ and consequently $\pi(a)^*=\pi(a^*)$, as desired.

(2)
Let $a\in A$, and let $x\in B'$.
We need to show that $\pi(a)x=x\pi(a)$.
Given $b\in B$ and $\xi\in H$, using at the first and third step that $x$ commutes with $\varphi(b)$ and~$\varphi(ab)$, we have
\[
\pi(a)x\varphi(b)\xi
= \pi(a)\varphi(b)x\xi
= \varphi(ab)x\xi
= x\varphi(ab)\xi
= x\pi(a)\varphi(b)\xi.
\]
Similarly, given $c\in B$ and $\xi\in H$, using that $\varphi(c)^*$ and $\varphi(ca^*)^*$ belong to $B$ and that they therefore commute with $x$, we get
\[
\pi(a)x\varphi(c)^*\xi
= \pi(a)\varphi(c)^*x\xi
= \varphi(ca^*)^*x\xi
= x\varphi(ca^*)^*\xi
= x\pi(a)\varphi(c)^*\xi.
\]
Using linearity and continuity of $\pi(a)x$ and $x\pi(a)$, it follows that $\pi(a)x\xi=x\pi(a)\xi$ for all $\xi\in\overline{H_0}$.

Using that $x$ commutes with $\varphi(b)$ and $\varphi(c)^*$ for all $b,c\in A$, one readily 
checks that $x(\overline{H_0})\subseteq\overline{H_0}$.
Similarly, since $x^*$ commutes with $\varphi(b)$ and $\varphi(c)^*$ for all $b,c\in A$, we get $x^*(\overline{H_0})\subseteq\overline{H_0}$.
It follows that $x$ leaves $\overline{H_0}^\perp$ invariant, and thus
\[
\pi(a)x\eta = 0 = x\pi(a)\eta
\]
for all $\eta\in\overline{H_0}^\perp$.
In conclusion, we obtain $\pi(a)x=x\pi(a)$, as desired.

(3)
Let $a,b\in A$.
We have
\[
\pi(a)\varphi(b)\xi\stackrel{\eqref{eqn:Defpi}}{=}\varphi(ab)\xi
\]
for all $\xi\in H$, and therefore $\pi(a)\varphi(b)=\varphi(ab)$.

To check that $\varphi(a)\pi(b)=\varphi(ab)$, we prove that these
operators agree both on $\overline{H_0}$ and on its orthogonal complement. 
Given $c\in A$, we have
\begin{align}\label{eq:Comm1}\tag{3.6}
\varphi(a)\pi(b)\varphi(c)\xi\stackrel{\eqref{eqn:Defpi}}{=}\varphi(a)\varphi(bc)\xi
\stackrel{\eqref{eq:ZP}}{=}\varphi(ab)\varphi(c)\xi. 
\end{align}
Similarly,
\begin{align}\label{eq:Comm2}\tag{3.7}
\varphi(a)\pi(b)\varphi(c)^*\xi\stackrel{\eqref{eqn:Defpi}}{=}\varphi(a)\varphi(cb^*)^*\xi
\stackrel{\eqref{eq:DO}}{=}\varphi(ab)\varphi(c)^*\xi. 
\end{align}
It follows from \eqref{eq:Comm1} and \eqref{eq:Comm2} that the linear maps $\varphi(a)\pi(b)$ and $\varphi(ab)$ agree on~$\overline{H_0}$.
Let $\eta\in\overline{H_0}^\perp$. Then $\pi(b)\eta=0$. On the other hand,
we have
\[\langle \varphi(ab)\eta,\varphi(ab)\eta\rangle=
 \langle \eta,\varphi(ab)^*\varphi(ab)\eta\rangle=0,
\]
where at the last step we use that $\varphi(ab)^*\varphi(ab)\eta$ belongs to $\overline{H_0}$. 
This shows that~$\varphi(ab)$ and~$\varphi(a)\pi(b)$ agree on $\overline{H_0}^\perp$, and hence 
$\varphi(a)\pi(b)=\varphi(ab)$.

The last two equalities are proved analogously. (They also follow by 
replacing $\varphi$ with $\varphi^*$,
which satisfies the same assumptions as $\varphi$ and has
$\pi_\varphi=\pi_{\varphi^*}$.) 
\end{proof}

\begin{ntn}
\label{pgr:extensions}
Let $A$ and $B$ be \ca s and let $\varphi\colon A\to B\subseteq \Bdd(H)$ be a bounded, linear map that preserves zero-products, range-orthogonality, and also domain-orthogonality.
Denote by $\pi_\varphi\colon A\to B''$ the canonical $\ast$-homomorphism provided by
\autoref{prp:formulasPiPhiSmall}.
We denote by $\Phi,\Pi_\varphi\colon A^{**}\to\Bdd(H)$ the (unique) extensions of $\varphi,\pi_\varphi\colon A\to\Bdd(H)$ to weak*-continuous, bounded linear maps,
which are explicitly constructed as follows.
Let $S_1(H)$ denote the space of trace-class of operators on $H$, which we naturally identify with the (unique) isometric predual of $\Bdd(H)$.
Let $\kappa\colon S_1(H)\to S_1(H)^{**}$ be the natural inclusion of $S_1(H)$ into its bidual.
Then the transpose map $\kappa^*\colon \Bdd(H)^{**}\cong S_1(H)^{***}\to S_1(H)^{*}\cong\Bdd(H)$ is a weak*-continuous $\ast$-ho\-mo\-mor\-phism, and
\[
\Phi = \kappa^*\circ\varphi^{**}\colon A^{**}\to \Bdd(H), \andSep
\Pi_\varphi = \kappa^*\circ\pi_\varphi^{**}\colon A^{**}\to \Bdd(H).
\]
By \autoref{prp:Bitranspose}, $\varphi^{**}$ preserves range-orthogonality, domain-orthogonality and zero-products, and hence so does $\Phi$.
Similarly, $\Pi_\varphi$ is a $\ast$-homomorphism.
Note that the images of $\Phi$ and $\Pi_\varphi$ are contained in $B''\subseteq\Bdd(H)$.
\end{ntn}

\begin{prp}
\label{prp:formulasPiPhiBig}
Let $A$ and $B$ be \ca s and let 
$\varphi\colon A\to B\subseteq \Bdd(H)$ be a bounded, linear map that preserves zero-products, range-orthogonality, and also domain-orthogonality.
Denote by $\pi_\varphi\colon A\to B''$ the canonical $\ast$-homomorphism provided by
\autoref{prp:formulasPiPhiSmall}, and let 
$\Phi,\Pi_\varphi\colon A^{\ast\ast}\to B''$ the maps from \autoref{pgr:extensions}.
\begin{enumerate}
\item 
For all $a,b\in A$, we have
\[
\Pi_\varphi(a)\Phi(b)=\Phi(ab)=\Phi(a)\Pi_\varphi(b) \andSep
\Pi_\varphi(a)\Phi(b)^*=\Phi(ba^*)^*=\Phi(a^*)^*\Pi_\varphi(b^*).
\]
\end{enumerate}
Let $C$ be the (not necessarily self-adjoint) closed subalgebra of $B$ generated by the image of $\varphi$, and let $D:=C^*(\varphi(A))$ be the sub-\ca{} of $B$ generated by $\varphi(A)$.
\begin{enumerate}
\setcounter{enumi}{1}
\item 
Let $a\in M(A)\subseteq A^{**}$.
Then $\Phi(a)$ and $\Pi_\varphi(a)$ normalize $C$ and D, that is, $\Phi(a)x,x\Phi(a),\Pi_\varphi(a)x$, and $x\Pi_\varphi(a)$ belong to $C$ (respectively, to $D$) for every $x\in C$ (respectively, $x\in D$).
\item 
If $a$ belongs to the center of $M(A)$, then $\Phi(a)$ belongs to $C'$.
\end{enumerate}
\end{prp}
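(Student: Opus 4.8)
The plan is to deduce Part~(1) from \autoref{prp:formulasPiPhiSmall}(3), upgrade it to all of $A^{**}$, read off from it a ``weighted'' description of $\Phi$ in terms of $\Pi_\varphi$, and then obtain Parts~(2) and~(3) by elementary manipulations with that description.

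For Part~(1), note that for $a,b\in A$ the four identities are precisely \autoref{prp:formulasPiPhiSmall}(3), since $\Phi$ and $\Pi_\varphi$ restrict to $\varphi$ and $\pi_\varphi$. For the other parts I will in fact need these identities for all $a,b\in A^{**}$, which one obtains by the two-step weak*-density argument in the proof of \autoref{prp:Bitranspose}: fix $b\in A$, approximate a given $a\in A^{**}$ weak* by a net in $A$, and use that $\Phi$ and $\Pi_\varphi$ are weak*-continuous and that multiplication and the adjoint on $\Bdd(H)$ are separately weak*-continuous; then repeat in the variable $b$. Putting $h:=\Phi(1_{A^{**}})\in B''$ and specialising the extended identities to $b=1_{A^{**}}$ and then to $a=1_{A^{**}}$ yields
\[
\Phi(x)=\Pi_\varphi(x)\,h=h\,\Pi_\varphi(x)\qquad\text{for all }x\in A^{**},
\]
so that $h$, and hence also $h^{*}$, commutes with $\Pi_\varphi(A^{**})$.

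For Parts~(2) and~(3), first note that $\varphi(b)=\Pi_\varphi(b)h$ and $\varphi(b)^{*}=\Pi_\varphi(b^{*})h^{*}$, so, as $h$ and $h^{*}$ commute with the range of $\Pi_\varphi$, every monomial in $\varphi(A)$ has the form $\Pi_\varphi(c)\,h^{n}$ and every monomial in $\varphi(A)\cup\varphi(A)^{*}$ has the form $\Pi_\varphi(c)\,w$ with $c\in A$ and $w$ a nonempty word in $h,h^{*}$; conversely, by Cohen factorisation every such element is a monomial of the respective kind. Hence $C$ is the closed linear span of $\{\Pi_\varphi(c)\,h^{n}: c\in A,\ n\ge1\}$, and $D$ is the closed linear span of $\{\Pi_\varphi(c)\,w: c\in A,\ w\text{ a nonempty word in }h,h^{*}\}$. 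Now fix $a\in M(A)$. On a spanning element of $C$ one computes
\[
\Phi(a)\bigl(\Pi_\varphi(c)\,h^{n}\bigr)=\Pi_\varphi(ac)\,h^{n+1},\qquad
\bigl(\Pi_\varphi(c)\,h^{n}\bigr)\Phi(a)=\Pi_\varphi(ca)\,h^{n+1},
\]
and the same with $\Pi_\varphi(a)$ in place of $\Phi(a)$ and one power of $h$ fewer; the analogous formulas for $D$ follow on replacing $h^{n}$ by a word $w$, since $hw$ and $wh$ are again words in $h,h^{*}$. As $a\in M(A)$, both $ac$ and $ca$ lie in $A$, so all of these products again lie in $C$ (resp.\ $D$); by boundedness of left and right multiplication by $\Phi(a)$ and $\Pi_\varphi(a)$ this proves Part~(2). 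If moreover $a$ lies in the centre of $M(A)$, then $ac=ca$ for all $c\in A$, so the two displayed products coincide; hence $\Phi(a)$ commutes with every spanning element of $C$, and since $\{\Phi(a)\}'$ is a norm-closed subalgebra of $\Bdd(H)$ it contains $C$, giving Part~(3).

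The main difficulty is conceptual rather than computational: $C$ and $D$ are norm-closed but not weak*-closed, so Part~(2) cannot be proved by approximating $a\in M(A)$ weak* by elements of $A$ and passing to the limit. The weighted factorisation $\Phi(x)=\Pi_\varphi(x)h$ from Part~(1) is precisely what circumvents this, reducing the problem to the elementary identities above together with one Cohen-factorisation density argument to identify $C$ and $D$.
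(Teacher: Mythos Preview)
Your proof is correct and takes a different route from the paper's. For Part~(2), the paper works directly with monomials in $G=\varphi(A)\cup\varphi(A)^*$: to show that $\Pi_\varphi(a)$ normalises, it applies Part~(1) (extended to $a\in A^{**}$, exactly as you do) to absorb $\Pi_\varphi(a)$ into the leading factor of a monomial; to show that $\Phi(a)$ normalises, it Cohen-factorises the leading factor $b=b_1b_2$ and then applies the identities of \autoref{prp:CharOrthPres} for $\Phi$ directly, e.g.\ $\Phi(a)\Phi(b_1b_2)=\Phi(ab_1)\Phi(b_2)=\varphi(ab_1)\varphi(b_2)$. For Part~(3) the paper uses only the zero-product identity: $\Phi(a)\varphi(b)=\Phi(1_Aa)\Phi(b)=\Phi(1_A)\Phi(ab)=\Phi(1_A)\Phi(ba)=\varphi(b)\Phi(a)$. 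You instead extract the weight $h=\Phi(1)$ and the factorisation $\Phi=\Pi_\varphi(\cdot)h=h\Pi_\varphi(\cdot)$ at the outset, reparametrise $C$ and $D$ as closed spans of elements $\Pi_\varphi(c)w$ with $c\in A$ and $w$ a word in $h,h^*$, and then (2) and~(3) drop out of the homomorphism property of $\Pi_\varphi$ together with $ac,ca\in A$ for $a\in M(A)$. Your approach is conceptually cleaner and in effect already proves \autoref{prp:CharWeighted}; the paper's approach is more modular, postponing the extraction of $h$ to \autoref{prp:CanonicalSupportHomo}, and in particular its proof of~(3) uses nothing about $\Pi_\varphi$ at all. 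Both arguments invoke Cohen factorisation at essentially the same point.
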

\begin{proof}
(1)
This follows from part~(3) of~\autoref{prp:formulasPiPhiSmall} by applying the same argument as in the proof of \autoref{prp:Bitranspose}, using twice that multiplication on $\Bdd(H)$ is separately
weak*-continuous.

(2)
We only show the normalization for $D$.
The proof for $C$ is similar, but easier.
Set
\[
G := \{ \varphi(b) \colon b\in A\} \cup \{ \varphi(c)^*\colon c\in A \}.
\]
Then the set of finite linear sums of finite products of elements in $G$ is dense in~$D$.

We first show that $\Pi_\varphi(a)$ is a left normalizer of $D$, that is, $\Pi_\varphi(a)D\subseteq D$.
Using that $\Pi_\varphi(a)$ is a bounded, linear operator, it suffices to show that $\Pi_\varphi(a)x\in D$ whenever 
$x\in G^n$ is a finite product of elements in $G$.
Let $x=gy$ with $g\in G$ and $y\in G^n$, for some $n\in\mathbb{N}$.
If $g=\varphi(b)$ for some $b\in A$, then applying part~(1) of~\autoref{prp:formulasPiPhiBig}, and using that $ab\in A$, we obtain
\[
\Pi_\varphi(a)x
= \Pi_\varphi(a)gy
= \Pi_\varphi(a)\Phi(b)y
= \Phi(ab)y 
= \varphi(ab)y 
\in G^{n+1}\subseteq D.
\]
Similarly, if $g=\varphi(c)^*$ for some $c\in A$, using that $ca^*\in A$, we obtain
\[
\Pi_\varphi(a)x
= \Pi_\varphi(a)gy
= \Pi_\varphi(a)\Phi(c)^*y
= \Phi(ca^*)^*y
= \varphi(ca^*)^*y 
\in G^{n+1}\subseteq D.
\]

Analogously, one verifies that $\Pi_\varphi(a)$ is a right normalizer of $D$, that is, we have $D\Pi_\varphi(a)\subseteq D$.

Next, we show that $\Phi(a)$ is a left normalizer of $D$.
Again, it suffices to verify that $\Phi(a)x\in D$ for $x=gy$ with $g\in G$ and $y\in G^n$ 
a product of $n$ elements in $G$.
If $g=\varphi(b)$ for some $b\in A$, then choose $b_1,b_2\in A$ with $b=b_1b_2$.
Using that $\Phi$ preserves zero-products and therefore satisfies the formula in \autoref{prp:CharOrthPres}(1), and using that $ab_1\in A$, we obtain
\[
\Phi(a)x
= \Phi(a)gy
= \Phi(a)\Phi(b_1b_2)y
= \Phi(ab_1)\Phi(b_2)y 
= \varphi(ab_1)\varphi(b_2)y 
\in G^{n+2}\subseteq D.
\]

Similarly, if $g=\varphi(c)^*$ for some $c\in A$, then choose $c_1,c_2\in A$ with $c=c_1c_2$.
Using that $\Phi$ preserves domain-orthogonality and therefore satisfies the formula in
part~(3) of~\autoref{prp:CharOrthPres}, and using that $ac_2^*\in A$, we obtain
\[
\Phi(a)x
= \Phi(a)gy
= \Phi(a)\Phi(c_1c_2)^*y
= \Phi(ac_2^*)\Phi(c_1)^*y
= \varphi(ac_2^*)\varphi(c_1)^*y 
\in G^{n+2}\subseteq D.
\]
Analogously, one verifies that $\Phi(a)$ is a left normalizer for $D$.

(3)
Let $a\in M(A)$ be central.
Let $b\in A$ and denote by $1_A$ the unit of $M(A)$.
Using that $\Phi$ preserves zero-products and therefore satisfies the formula in \autoref{prp:CharOrthPres}(1), we get
\[
\Phi(a)\varphi(b)
= \Phi(1_Aa)\Phi(b)
= \Phi(1_A)\Phi(ab)
= \Phi(1_A)\Phi(ba)
= \Phi(1_Ab)\Phi(a)
= \varphi(b)\Phi(a).
\]
We deduce that $\Phi(a)$ commutes with every finite linear combination of products of finitely many elements in $\varphi(A)$.
Since such elements are dense in $C$, we have $\Phi(a)\in C'$.
\end{proof}

\section{Weighted \texorpdfstring{$\ast$}{*}-homomorphisms}
\label{sec:weightedHomo}

In this section, we prove the main result of the paper (\autoref{prp:CharWeighted}), where we obtain an intrinsic and algebraic characterization of weighted $\ast$-homomorphisms between \ca{s}.
Our characterization simplifies considerably for self-adjoint and positive maps; 
see Corollaries~\ref{prp:CharWeightedSelfadjoint} and~\ref{prp:CharOrderZero}.

\begin{lma}
\label{prp:CanonicalSupportHomo}
Let $\varphi\colon A\to B$ be a bounded, linear map between \ca{s} that preserves range-orthogonality, domain-orthogonality and zero-products.
Assume that $B=C^*(\varphi(A))$, that is, $\varphi(A)$ is not contained in a proper sub-\ca{} of $B$.
Let~$C$ be the closed subalgebra of $B$ generated by the image of~$\varphi$.
Set $h:=\varphi^{**}(1)\in B^{**}$.

Then there exists a canonical $\ast$-homomorphism $\pi_\varphi\colon A\to B^{**}$ such that $\varphi(a)=h\pi_\varphi(a)=\pi_\varphi(a)h$ for all $a\in A$, and such that $\pi_\varphi(a)$ normalizes both $B$ and $C$ for every $a\in A$.
Further, $h$ normalizes $B$ and $C$, and commutes with every element of~$C$.
In particular, we may view $\pi_\varphi$ as a $\ast$-homomorphism $\pi_\varphi\colon A\to M(B)\cap\{h\}'$, and $h$ belongs to $M(B)\cap C'$.
\end{lma}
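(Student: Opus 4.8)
The plan is to build $\pi_\varphi$ by specializing the representation $B\subseteq\Bdd(H)$ in \autoref{prp:formulasPiPhiSmall} to the universal representation of $B$, so that $B''=B^{**}$, and then use the machinery of \autoref{pgr:extensions} and \autoref{prp:formulasPiPhiBig} to read off all the stated properties.

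\medskip

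First, I would fix the universal representation $B\subseteq\Bdd(H_u)$, for which the bicommutant $B''$ is canonically $\ast$-isomorphic to $B^{**}$. \autoref{prp:formulasPiPhiSmall} then produces a $\ast$-homomorphism $\pi_\varphi\colon A\to B''=B^{**}$ satisfying $\pi_\varphi(a)\varphi(b)=\varphi(ab)=\varphi(a)\pi_\varphi(b)$ for all $a,b\in A$, together with the adjointed versions. Next, pass to the weak*-continuous extensions $\Phi=\varphi^{**}$ and $\Pi_\varphi=\pi_\varphi^{**}$ as in \autoref{pgr:extensions}; by \autoref{prp:Bitranspose}, $\Phi$ still preserves the three orthogonality types, and $\Pi_\varphi$ is still a $\ast$-homomorphism. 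Now $1_{M(A)}$ lies in the center of $M(A)\subseteq A^{**}$, so \autoref{prp:formulasPiPhiBig}(1) applied to the pair $(a,1)$ and $(1,a)$ gives
\[
\Pi_\varphi(a)h=\Pi_\varphi(a)\Phi(1)=\Phi(a)=\varphi(a)\andSep h\Pi_\varphi(a)=\Phi(1)\Pi_\varphi(a)=\Phi(a)=\varphi(a),
\]
and since $\Pi_\varphi$ restricts to $\pi_\varphi$ on $A$, this is exactly $\varphi(a)=h\pi_\varphi(a)=\pi_\varphi(a)h$.

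\medskip

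For the normalization statements I would invoke \autoref{prp:formulasPiPhiBig}(2) with $a\in A\subseteq M(A)$: this says $\Phi(a)=\varphi(a)$ and $\Pi_\varphi(a)=\pi_\varphi(a)$ normalize both $C$ and $D=C^*(\varphi(A))=B$ (using the hypothesis $B=C^*(\varphi(A))$). Applying the same part~(2) with $a=1_{M(A)}$ shows that $h=\Phi(1)$ normalizes $C$ and $B$. Finally, $1_{M(A)}$ is central in $M(A)$, so \autoref{prp:formulasPiPhiBig}(3) gives $h=\Phi(1)\in C'$, i.e. $h$ commutes with every element of $C$. Combining: $\pi_\varphi(a)\in M(B)$ for all $a$ (an element normalizing $B$ inside $B^{**}=B''$ is a multiplier of $B$), and $\pi_\varphi(a)$ commutes with $h$ because $h\pi_\varphi(a)=\varphi(a)=\pi_\varphi(a)h$; hence $\pi_\varphi$ corestricts to $A\to M(B)\cap\{h\}'$, and $h\in M(B)\cap C'$.

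\medskip

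The only genuine content beyond bookkeeping is the identification $B''\cong B^{**}$ under the universal representation and the observation that an element of $\Bdd(H_u)$ normalizing $B$ actually lies in $M(B)$; both are standard, but I would state them explicitly since the rest of the argument is just assembling Propositions~\ref{prp:formulasPiPhiSmall} and~\ref{prp:formulasPiPhiBig}. I do not anticipate a real obstacle here — the hard analytic work (the $2^n$-th power estimate in \autoref{prp:pi-a-bounded} and the passage to bitransposes) has already been done upstream, so this lemma is essentially a clean corollary packaging those results in the form needed for the proof of \autoref{prp:CharWeighted}.
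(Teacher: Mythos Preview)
Your proposal is correct and follows essentially the same route as the paper's proof: both choose the universal representation so that $B''\cong B^{**}$, obtain $\pi_\varphi$ from \autoref{prp:formulasPiPhiSmall}, and then read off the factorization $\varphi(a)=h\pi_\varphi(a)=\pi_\varphi(a)h$ from \autoref{prp:formulasPiPhiBig}(1) evaluated at $1_{M(A)}$, with the normalization and centrality of $h$ coming from parts~(2) and~(3). Your write-up is in fact slightly more explicit than the paper's (e.g.\ spelling out why $\pi_\varphi(a)\in M(B)$ and why $\pi_\varphi(a)\in\{h\}'$), but there is no substantive difference in strategy.
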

\begin{proof}
Use \cite[Section~III.5.2]{Bla06OpAlgs} to
choose a (universal) representation $B\subseteq\Bdd(H)$ such that,
with $\kappa^*\colon\Bdd(H)^{**}\to\Bdd(H)$ denoting the $\ast$-homomorphism described in \autoref{pgr:extensions}, the restriction of $\kappa^*$ to $B^{**}$ is an isomorphism onto $B''$.
The situation is shown in the following commutative diagram:
\[
\xymatrix{
& B^{**} \ar@{^{(}->}[rr] \ar[dr]_{\cong}^{\kappa^*|_{B^{**}}}
&& \Bdd(H)^{**} \ar[d]^{\kappa^*} \\
A \ar[r]^{\varphi} \ar[ur]^{\varphi^{**}}
& B \ar@{^{(}->}[r] \ar@{^{(}->}[u]
& B'' \ar@{^{(}->}[r]
& \Bdd(H).
}
\]

By \autoref{prp:formulasPiPhiSmall}, 
there is a canonical *-ho\-mo\-mor\-phism $\pi_\varphi\colon A\to B''$ such that
\[
\pi_\varphi(a)\varphi(b) = \varphi(ab)=\varphi(a)\pi_\varphi(b)
\]
for all $a,b\in A$.
Let $\Phi=\varphi^{**}\colon A^{**}\to B^{**}\subseteq \Bdd(H)$ be the unique extension of
$\varphi$ to a weak-* continuous map.
After identifying $B^{**}$ with $B''$, the map $\Phi$ is simply the bitranspose of $\varphi$.
Hence, applying part~(1) of~\autoref{prp:formulasPiPhiBig} at the second steps, we obtain
\begin{align*}
\varphi(a)
&= \Phi(1a)
= \Phi(1)\pi_\varphi(a)
= h\pi_\varphi(a), \andSep \\
\varphi(a)
&= \Phi(a1)
= \pi_\varphi(a)\Phi(1)
= \pi_\varphi(a)h
\end{align*}
for every $a\in A$.

It follows from part~(2) of~\autoref{prp:formulasPiPhiBig} that $\pi_\varphi(a)$ normalizes both~$B$ and~$C$, for each $a\in A$.
Moreover, $h=\Phi(1)$ normalizes both $B$ and $C$ by
part~(3) of~\autoref{prp:formulasPiPhiBig}, and clearly belongs to $C'$.
\end{proof}

The following theorem characterizes weighted $\ast$-homomorphisms (\autoref{dfn:WeightedStarHomo}) 
in terms of orthogonality-preservation
properties. Note that part (2) gives canonical choices for the algebra $D$ and $h\in D$. 

\begin{thm}
\label{prp:CharWeighted}
Let $\varphi\colon A\to B$ be a bounded, linear map between \ca{s}.
Then the following are equivalent:
\begin{enumerate}
\item
$\varphi$ is a weighted $\ast$-homomorphism, namely:
there exist a \ca{} $D$ with $B\subseteq D$, a $\ast$-homomorphism $\pi\colon A\to D$ and $h\in D$ such that $\varphi(a)=h\pi(a)=\pi(a)h$ for all $a\in A$;
\item
there exists a (canonical) $\ast$-homomorphism $\pi_\varphi\colon A\to B^{**}$ such that we have $\varphi(a)=\varphi^{**}(1)\pi_\varphi(a)=\pi_\varphi(a)\varphi^{**}(1)$ for all $a\in A$;
\item
$\varphi$ preserves range-orthogonality, domain-orthogonality and zero-products.
\end{enumerate}
\end{thm}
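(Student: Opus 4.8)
The plan is to prove the cycle of implications $(1)\Rightarrow(3)\Rightarrow(2)\Rightarrow(1)$, with the bulk of the work being $(3)\Rightarrow(2)$, which is essentially a bootstrapping of \autoref{prp:CanonicalSupportHomo} from the case $B=C^*(\varphi(A))$ to the general case.

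\textbf{$(1)\Rightarrow(3)$.} Suppose $\varphi(a)=h\pi(a)=\pi(a)h$ for a $\ast$-homomorphism $\pi\colon A\to D\supseteq B$ and $h\in D$. If $a^*b=0$, then $\pi(a)^*\pi(b)=\pi(a^*b)=0$, so $\varphi(a)^*\varphi(b)=\pi(a)^*h^*h\pi(b)=\pi(a)^*h^*\pi(b)h$. Here one uses that $h$ commutes with $\pi(a)$ and with $\pi(b)$: from $h\pi(b)=\pi(b)h$ we get $h^*\pi(b)=\pi(b)h^*$ only after a small argument — actually the cleanest route is to note that since $h$ commutes with every element of $\pi(A)$ and with $h$, and (taking adjoints of $\varphi(a)=h\pi(a)=\pi(a)h$) also $h^*$ commutes with $\pi(a)^*$ for all $a$, hence $h^*$ commutes with $\pi(A)$. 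Then $\varphi(a)^*\varphi(b)=h^*\pi(a)^*\pi(b)h=h^*\pi(a^*b)h=0$. Range-orthogonality is preserved; domain-orthogonality and zero-products are handled the same way (for zero-products $ab=0$ gives $\pi(ab)=0$, so $\varphi(a)\varphi(b)=\pi(a)h h\pi(b)=\pi(a)h\pi(b)h=\pi(ab)h^2=0$).

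\textbf{$(3)\Rightarrow(2)$.} Assume $\varphi$ preserves the three orthogonalities. Let $B_0:=C^*(\varphi(A))\subseteq B$, and regard $\varphi$ as a map $\varphi_0\colon A\to B_0$, which still preserves the three orthogonalities and now has dense range in the sense required by \autoref{prp:CanonicalSupportHomo}. Applying that lemma, we get a $\ast$-homomorphism $\pi_{\varphi_0}\colon A\to B_0^{**}$ with $\varphi_0(a)=h_0\pi_{\varphi_0}(a)=\pi_{\varphi_0}(a)h_0$ where $h_0=\varphi_0^{**}(1)$. Now the canonical embedding $B_0\hookrightarrow B$ induces a (normal, unital) $\ast$-homomorphism $\iota^{**}\colon B_0^{**}\to B^{**}$, and one checks $\iota^{**}\circ\varphi_0^{**}=\varphi^{**}$ as maps $A^{**}\to B^{**}$ (both are weak*-continuous and agree on $A$), so in particular $\iota^{**}(h_0)=\varphi^{**}(1)=:h$. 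Set $\pi_\varphi:=\iota^{**}\circ\pi_{\varphi_0}\colon A\to B^{**}$, which is a $\ast$-homomorphism. Applying $\iota^{**}$ to $\varphi_0(a)=h_0\pi_{\varphi_0}(a)=\pi_{\varphi_0}(a)h_0$ and using multiplicativity of $\iota^{**}$ together with $\iota^{**}(\varphi_0(a))=\varphi(a)$, we obtain $\varphi(a)=h\pi_\varphi(a)=\pi_\varphi(a)h$ with $h=\varphi^{**}(1)$, which is exactly~(2).

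\textbf{$(2)\Rightarrow(1)$.} Take $D:=B^{**}$, which contains $B$ as a sub-\ca{}, take $\pi:=\pi_\varphi\colon A\to B^{**}$, and take $h:=\varphi^{**}(1)\in B^{**}$. Then by hypothesis $\varphi(a)=h\pi(a)=\pi(a)h$ for all $a\in A$, which is precisely the definition of a weighted $\ast$-homomorphism. This closes the cycle.

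\textbf{Main obstacle.} The routine parts are the two easy implications; the substantive content is packaged in \autoref{prp:CanonicalSupportHomo} (hence ultimately in \autoref{prp:formulasPiPhiSmall} and \autoref{prp:formulasPiPhiBig}), so for the proof of \autoref{prp:CharWeighted} itself the only genuine point requiring care is the functoriality step in $(3)\Rightarrow(2)$: verifying that the bitranspose of the inclusion $B_0\hookrightarrow B$ carries $\varphi_0^{**}$ to $\varphi^{**}$ and $\pi_{\varphi_0}$ to a $\ast$-homomorphism into $B^{**}$ with the right compression identities. This is a standard weak*-density argument (a bounded net in $A$ converging weak* to $1_{A^{**}}$, separate weak*-continuity of multiplication in $B^{**}$, and normality of $\iota^{**}$), but it is the one place where one must be slightly attentive, since a priori $\pi_{\varphi_0}$ lands only in $B_0^{**}$ and one needs to transport it correctly into $B^{**}$ and re-derive the identities there rather than merely quote them.
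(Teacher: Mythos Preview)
Your proof is correct and follows essentially the same route as the paper: the same cycle $(2)\Rightarrow(1)\Rightarrow(3)\Rightarrow(2)$, with $(3)\Rightarrow(2)$ handled by corestricting to $B_0=C^*(\varphi(A))$, applying \autoref{prp:CanonicalSupportHomo}, and pushing forward along $\iota^{**}\colon B_0^{**}\to B^{**}$. Two cosmetic points: your $(1)\Rightarrow(3)$ detour through commutation of $h^*$ with $\pi(A)$ is unnecessary since $\varphi(a)^*\varphi(b)=(\pi(a)h)^*(\pi(b)h)=h^*\pi(a^*b)h=0$ directly, and $\iota^{**}$ is normal and multiplicative but not in general unital (its image of $1_{B_0^{**}}$ is only the support projection of $B_0$ in $B^{**}$)---though you never actually use unitality.
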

\begin{proof}
It is clear that~(2) implies~(1).
To show that~(1) implies~(3), assume that there exist a \ca{} $D$ with $B\subseteq D$, a $\ast$-homomorphism $\pi\colon A\to D$ and $h\in D$ such that $\varphi(a)=h\pi(a)=\pi(a)h$ for all $a\in A$.
To verify that $\varphi$ preserves range-orthogonality, let $a,b\in A$ satisfy $a^*b=0$.
Then
\[
\varphi(a)^*\varphi(b)
= (\pi(a)h)^*(\pi(b)h)
= h^*\pi(a)^*\pi(b)h
= h^*\pi(a^*b)h
= 0.
\]
Similarly, one verifies that $\varphi$ preserves domain-orthogonality and zero-products.

To show that~(3) implies~(2), assume that $\varphi$ preserves range-orthogonality, domain-orthogonality and zero-products.
Let $B_0:=C^*(\varphi(A))$ be the sub-\ca{} of $B$ generated by the image of $\varphi$.
Let $\varphi_0\colon A\to B_0$ be the corestriction, and set $h_0:=\varphi_0^{**}(1)\in B_0^{**}$.
Applying \autoref{prp:CanonicalSupportHomo}, we obtain a $\ast$-homomorphism $\pi_0\colon A\to B_0^{**}$ such that $\varphi_0(a)=h_0\pi_0(a)=\pi_0(a)h_0$ for all $a\in A$.
The inclusion $\iota\colon B_0\hookrightarrow B$ induces a natural inclusion $\iota^{**}\colon B_0^{**}\hookrightarrow B^{**}$ that identifies $h_0$ with $\varphi^{**}(1)$.
Then the $\ast$-homomorphism $\pi_\varphi:=\iota^{**}\circ\pi_0\colon A\to B^{**}$ has the desired properties.
\end{proof}

\begin{rmk}
Let us clarify the relationship between our notion of `weighted $\ast$-homomorphism' from \autoref{dfn:WeightedStarHomo} and the concept of `weighted homomorphism' used in the theory of (Banach) algebras.

A map $\varphi\colon A\to B$ between Banach algebras is said to be a \emph{weighted homomorphism} if there exists a homomorphism $\pi\colon A\to B$ and an invertible centralizer $W$ on $B$ such that $\varphi=W\pi$.
Here, a \emph{centralizer} on $B$ is a linear map $W\colon B\to B$ satisfying $W(ab)=aW(b)=W(a)b$ for all $a,b\in B$.
We use $\Gamma(B)$ to denote the algebra of centralizers on $B$.

If $B$ is \emph{faithful} (that is, every element $b\in B$ satisfying $bB=\{0\}$ or $Bb=\{0\}$ is zero), then centralizers correspond to central multipliers:
A \emph{multiplier} (also called a \emph{double centralizer}) on $B$ is a pair $(L,R)$ of linear maps $L,R\colon B\to B$ such that $aL(b)=R(a)b$ for all $a,b\in B$.
We obtain a natural map from $\Gamma(B)$ to the multiplier algebra $M(B)$ given by $W\mapsto (W,W)$.
If $B$ is faithful, then this map defines an isomorphism between $\Gamma(B)$ and $Z(M(B))$, the center of the multiplier algebra.

Now let $\varphi\colon A\to B$ be a weighted $\ast$-homomorphism between \ca s.
Let $B_0:=C^*(\varphi(A))$ be the sub-\ca{} of $B$ generated by the image of $\varphi$, and let $C$ denote the closed subalgebra of $B$ generated by $\varphi(A)$.
By \autoref{prp:CanonicalSupportHomo}, there exists a $\ast$-homomorphism $\pi\colon A\to M(B_0)$ and a multiplier $h\in M(B_0)\cap C'$ such that $\varphi(a)=h\pi(a)=\pi(a)h$ for all $a\in A$.
However, while $h$ commutes with elements in the image of $\varphi$ (and as a consequence belongs to $C'$), it may not commute with the \emph{adjoints} of the elements in $\varphi(A)$, and therefore does not necessarily belong to $Z(M(B_0))$. 

If $h$ is normal (which is automatically the case if $\varphi$ is self-adjoint), then Fuglede's theorem implies that $h$ also commutes with the adjoints of elements in $\varphi(A)$, and consequently belongs to $M(B_0)\cap B_0'$, and therefore to $Z(M(B_0))$.
Similarly, if $\varphi(A)$ is a self-adjoint subset of $B$ (which is automatically the case if $\varphi$ is surjective), then~$h$ belongs to $Z(M(B_0))$.
In both cases, we see that the map $\varphi\colon A\to B_0\subseteq M(B_0)$ is a weighted homomorphism in the algebraic sense.

On the other hand, we may view $\varphi$ as a map from $A$ to the Banach algebra $C$.
Note that multiplication by $h$ defines a centralizer on $C$, and that $\pi(a)$ defines a multiplier on $C$ for each $a\in A$.
However, $C$ can be very pathological.
For example, if $h^2=0$, then multiplication by $h$ defines the zero centralizer on $C$, the product of any two elements in $C$ is zero, $C$ is not faithful, and the canonical map $C\to M(C)$ is the zero map.
\end{rmk}

\begin{cor}
\label{prp:CharWeightedSelfadjoint}
Let $\varphi\colon A\to B$ be a self-adjoint, bounded, linear map between \ca{s}.
Then $\varphi$ is a weighted $\ast$-homomorphism if and only if $\varphi$ preserves zero-products.
\end{cor}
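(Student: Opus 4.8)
The plan is to obtain this as a quick corollary of \autoref{prp:CharWeighted}. The forward implication requires nothing new: if $\varphi$ is a weighted $\ast$-homomorphism, then by the implication (1)$\Rightarrow$(3) of \autoref{prp:CharWeighted} the map $\varphi$ preserves range-orthogonality, domain-orthogonality \emph{and} zero-products; in particular it preserves zero-products. Note that self-adjointness plays no role in this direction.

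For the converse, I would assume that $\varphi$ is self-adjoint and preserves zero-products, and then verify that $\varphi$ automatically preserves range-orthogonality and domain-orthogonality, so that \autoref{prp:CharWeighted} applies and exhibits $\varphi$ as a weighted $\ast$-homomorphism (indeed with the canonical data $\pi_\varphi$ and $h=\varphi^{**}(1)$). The key observation is that self-adjointness lets one feed adjoints into the zero-product hypothesis. Concretely: if $a^*b=0$, then the pair $(a^*,b)$ has zero product, so $\varphi(a^*)\varphi(b)=0$; since $\varphi(a^*)=\varphi(a)^*$ by self-adjointness, this reads $\varphi(a)^*\varphi(b)=0$, which is exactly range-orthogonality preservation. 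Symmetrically, if $ab^*=0$, then $(a,b^*)$ has zero product, whence $\varphi(a)\varphi(b^*)=0$, i.e. $\varphi(a)\varphi(b)^*=0$, which is domain-orthogonality preservation. After these two one-line computations, \autoref{prp:CharWeighted} finishes the argument.

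I do not expect any genuine obstacle here: all the substantial work -- constructing the support $\ast$-homomorphism and assembling the weight -- is already carried out in \autoref{sec:support} and packaged in \autoref{prp:CharWeighted}. The only thing worth emphasizing is the structural point that, for self-adjoint maps, the three orthogonality-preservation conditions of \autoref{prp:CharWeighted}(3) collapse to the single condition of zero-product preservation, precisely because conjugating by $\ast$ converts a range- (resp. domain-) orthogonal pair into a zero-product pair and self-adjointness transports the conclusion back.
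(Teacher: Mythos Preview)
Your proposal is correct and follows essentially the same approach as the paper: the paper's proof likewise observes that self-adjointness together with zero-product preservation gives $\varphi(a)^*\varphi(b)=\varphi(a^*)\varphi(b)=0$ whenever $a^*b=0$ (and similarly for domain-orthogonality), and then invokes \autoref{prp:CharWeighted}.
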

\begin{proof}
Assume that $\varphi$ preserves zero-products.
For $a,b\in A$ with $a^*b=0$, we have
\[
\varphi(a)^*\varphi(b)
= \varphi(a^*)\varphi(b)
= 0.
\]
That is, $\varphi$ preserves range-orthogonality.
One similarly shows that $\varphi$ preserves domain-orthogonality, and 
the statement thus follows from \autoref{prp:CharWeighted}.
\end{proof}

The above corollary motivates the following problem.

\begin{pbm}
Characterize weighted $\ast$-homomorphisms with normal weights.
\end{pbm}

The next result is a generalization to the setting of self-adjoint, zero-product preserving maps of the structure theorem of Winter and Zacharias \cite[Theorem~3.3]{WinZac09CpOrd0} for completely positive, order zero maps.

\begin{cor}
\label{prp:StructureWeightedSelfadjoint}
Let $\varphi\colon A\to B$ be a self-adjoint, bounded, linear map between \ca{s} that preserves zero-products.
Set $C:=C^*(\varphi(A))\subseteq B$. 
Then $h:=\varphi^{**}(1)$ belongs to $M(C)\cap C'\subseteq C^{**}\subseteq B^{**}$, and there exists a canonical 
$\ast$-homomorphism $\pi_\varphi\colon A\to M(C)\cap\{h\}'$ 
such that $\varphi(a)=h\pi_\varphi(a)$ for all $a\in A$.
\end{cor}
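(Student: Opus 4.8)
The plan is to deduce \autoref{prp:StructureWeightedSelfadjoint} from \autoref{prp:CharWeightedSelfadjoint} together with the finer structural information provided by \autoref{prp:CanonicalSupportHomo}, applied to the corestriction of $\varphi$ onto the \ca{} it generates. First I would pass to $C:=C^*(\varphi(A))$ and let $\varphi_0\colon A\to C$ be the corestriction of $\varphi$; since $\varphi$ is self-adjoint and preserves zero-products, so does $\varphi_0$, and by (the proof of) \autoref{prp:CharWeightedSelfadjoint} the map $\varphi_0$ in fact preserves range-orthogonality and domain-orthogonality as well. Thus \autoref{prp:CanonicalSupportHomo} applies to $\varphi_0\colon A\to C=C^*(\varphi_0(A))$, yielding a canonical $\ast$-homomorphism $\pi_{\varphi_0}\colon A\to C^{**}$ and the element $h:=\varphi_0^{**}(1)\in C^{**}$ with $\varphi_0(a)=h\pi_{\varphi_0}(a)=\pi_{\varphi_0}(a)h$ for all $a\in A$, with the additional conclusions that each $\pi_{\varphi_0}(a)$ normalizes $C$, that $h$ normalizes $C$ and commutes with every element of $C$, so that $\pi_{\varphi_0}$ corestricts to a $\ast$-homomorphism $A\to M(C)\cap\{h\}'$ and $h\in M(C)\cap C'$.

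The remaining point is to upgrade $h\in M(C)\cap C'$ to $h\in M(C)\cap C'$ being genuinely central, i.e.\ to see that the self-adjointness hypothesis forces $h$ to commute with the \emph{adjoints} of elements of $\varphi(A)$ as well, not merely with the elements themselves. Here I would invoke the observation already recorded in the Remark following \autoref{prp:CharWeighted}: since $\varphi$ is self-adjoint, the element $h=\varphi^{**}(1)$ is self-adjoint (indeed $h=\varphi^{**}(1^*)=\varphi^{**}(1)^*$ because $1$ is self-adjoint and $\varphi^{**}$ is again self-adjoint), hence in particular normal, and so Fuglede's theorem promotes the commutation $h x = x h$ for $x\in C$ to $h x^* = x^* h$; thus $h$ commutes with the dense $\ast$-subalgebra generated by $\varphi(A)$ and therefore with all of $C$ in the $\ast$-sense, which is exactly the statement $h\in M(C)\cap C'$ with $C'$ now the commutant inside $C^{**}$ (or $B^{**}$) of the $\ast$-algebra $C$. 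Finally I would identify $C^{**}$ with its image in $B^{**}$ under $\iota^{**}$, where $\iota\colon C\hookrightarrow B$ is the inclusion, noting that this identification sends $\varphi_0^{**}(1)$ to $\varphi^{**}(1)$, so that $h\in M(C)\cap C'\subseteq C^{**}\subseteq B^{**}$ as claimed, and set $\pi_\varphi:=\pi_{\varphi_0}$ viewed with target $M(C)\cap\{h\}'$.

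I expect the only genuinely delicate point to be the appeal to Fuglede's theorem: one must be careful that $h$ is being taken inside the von Neumann algebra $C^{**}$ (where $C$ sits as a weak*-dense $\ast$-subalgebra), so that $h$ normalizing $C$ together with normality of $h$ really does give $h\in C'$ in $C^{**}$; this is where the classical Fuglede--Putnam theorem (applied to the pair $(h,h)$, or directly in the von Neumann algebra) does the job. The rest — checking that corestriction preserves the three orthogonality properties, that $\varphi_0^{**}(1)$ corresponds to $\varphi^{**}(1)$ under the canonical inclusion $C^{**}\hookrightarrow B^{**}$, and that $\varphi(a)=h\pi_\varphi(a)$ transfers along this inclusion — is routine and follows the bookkeeping already carried out in the proof of \autoref{prp:CharWeighted}. (Note that the symmetric identity $\varphi(a)=\pi_\varphi(a)h$ holds as well, but is not needed for the stated conclusion since $h$ is now central in $M(C)$.)
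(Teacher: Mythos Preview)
Your proposal is correct and follows the same overall arc as the paper: reduce to the corestriction $\varphi_0\colon A\to C=C^*(\varphi(A))$, observe that it preserves all three orthogonality types, and apply \autoref{prp:CanonicalSupportHomo}. The one place where you diverge is your second paragraph, which is unnecessary. The paper simply notes that because $\varphi$ is self-adjoint, the set $\varphi(A)$ is itself self-adjoint (since $\varphi(a)^*=\varphi(a^*)\in\varphi(A)$), so the closed subalgebra generated by $\varphi(A)$ already coincides with $C^*(\varphi(A))=C$. Hence the conclusion of \autoref{prp:CanonicalSupportHomo} that $h$ commutes with the closed subalgebra generated by $\varphi(A)$ \emph{is} the statement $h\in C'$, with nothing further to prove. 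Your route through Fuglede's theorem works too, but is overkill: once you know $h=\varphi^{**}(1)$ is self-adjoint, taking adjoints of $hx=xh$ immediately gives $hx^*=x^*h$, so no Fuglede is needed. In short, the ``genuinely delicate point'' you anticipate is not delicate at all, and the paper's proof avoids it entirely by the elementary observation that the closed subalgebra and the $C^*$-subalgebra generated by a self-adjoint set agree.
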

\begin{proof}
By \autoref{prp:CharWeightedSelfadjoint}, $\varphi$ is a weighted $\ast$-homomorphism and thus preserves range-orthogonality, domain-orthogonality and zero-products by \autoref{prp:CharWeighted}.
Since~$\varphi$ is self-adjoint, it follows that $C$ agrees with the closed subalgebra of $B$ generated by~$\varphi(A)$.
Now it follows from \autoref{prp:CanonicalSupportHomo} that there exists a exists a $\ast$-ho\-mo\-mor\-phism $\pi_\varphi\colon A\to M(C)\cap\{h\}'$ such that $\varphi(a)=h\pi_\varphi(a)$ for all $a\in A$, and such that $h$ belongs to $M(C)\cap C'$.
\end{proof}

We stress the fact that the map $\pi_\varphi$ that we obtain in the 
previous corollary is natural. This has the following consequence:

\begin{rmk}
Adopt the notation and assumptions of \autoref{prp:StructureWeightedSelfadjoint}. Let $G$ be a topological
group, let $\alpha\colon G\to\mathrm{Aut}(A)$ and $\beta\colon G\to\mathrm{Aut}(B)$ be continuous actions, and suppose that
the map $\varphi\colon A\to B$ is equivariant. Then $h$ is $G$-invariant,
and there is a canonical
continuous action of $G$ on $C$, and hence there is a (not necessarily
continuous) action $\gamma\colon G\to \mathrm{Aut}(M(C)\cap \{h\}')$.
Using naturality of $\pi_\varphi$ at the first and third steps, and equivariance of $\varphi$ at the second step, we get
\[\pi_{\varphi}\circ\alpha_g=\pi_{\varphi\circ\alpha_g}=\pi_{\beta_g\circ\varphi}=\gamma_g\circ\pi_\varphi.\] 
In other words, the $\ast$-homomorphism 
$\pi_\varphi\colon A\to M(C)\cap \{h\}'$ is equivariant. Since the
action on $A$ is assumed to be continuous, the range of this map is 
contained in the $\gamma$-\emph{continuous part} of $M(C)\cap \{h\}'$, namely
\[M(C)_\gamma\cap \{h\}'=\{x\in M(C)\colon xh=hx \mbox{ and } 
 g\mapsto \gamma_g(x) \mbox{ is norm-continuous}\}.
\]
\end{rmk}

Recall a linear map $\varphi\colon A\to B$ between \ca{s} is \emph{positive} if $\varphi(A_+)\subseteq B_+$;
it is \emph{$n$-positive} if the amplification $\varphi^{(n)}=\varphi\otimes\mathrm{id}_{M_n}\colon M_n(A)\to M_n(B)$ is positive;
and it is \emph{completely positive} if it is $n$-positive for every $n\in\mathbb{N}$.

Note that a weighted $\ast$-homomorphism with weight $h$ is self-adjoint (positive) if and only if $h$ is self-adjoint (positive).
It is easy to see that a positively weighted $\ast$-homomorphism is even completely positive.
Hence, we obtain the following characterization of completely positive, order zero maps:

\begin{cor}
\label{prp:CharOrderZero}
A map between \ca{s} is completely positive, order zero if and only if it is positive and preserves zero-products.
In particular, every positive, zero-product preserving map between \ca{s} is automatically completely positive.
\end{cor}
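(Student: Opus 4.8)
The plan is to derive \autoref{prp:CharOrderZero} directly from the characterizations already established, chiefly \autoref{prp:CharWeightedSelfadjoint} and \autoref{prp:CharWeighted}, together with the structural description in \autoref{prp:StructureWeightedSelfadjoint} (or \autoref{prp:CanonicalSupportHomo}). First I would recall that a positive linear map $\varphi\colon A\to B$ between \ca{s} is automatically bounded and self-adjoint; this is a standard fact and needs no new argument. So the only content is the equivalence, for positive $\varphi$, between being completely positive and order zero on the one hand, and preserving zero-products on the other.

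For the forward direction, suppose $\varphi$ is completely positive and order zero. By definition (as recalled in the introduction, following \cite{WinZac09CpOrd0}), $\varphi$ preserves zero-products among positive elements; since every element is a linear combination of four positive elements and $\varphi$ is linear, one checks using the polarization-type identity that $\varphi$ preserves zero-products among all self-adjoint elements, and then among arbitrary elements—but in fact it is cleaner to invoke that an order zero map is in particular self-adjoint and zero-product preserving on self-adjoints, hence by \autoref{prp:CharWeightedSelfadjoint} (whose hypothesis only needs zero-products, and whose proof shows that for self-adjoint maps zero-products on self-adjoints already forces range- and domain-orthogonality) it is a weighted $\ast$-homomorphism; in particular it preserves zero-products on all of $A$. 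Alternatively, one can directly quote \cite[Theorem~3.3]{WinZac09CpOrd0}, which writes $\varphi(a)=h\pi(a)$ with $h$ a positive central multiplier and $\pi$ a $\ast$-homomorphism commuting with $h$, and then for $ab=0$ compute $\varphi(a)\varphi(b)=h\pi(a)h\pi(b)=h^2\pi(ab)=0$. Positivity is immediate from the definition.

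For the converse, suppose $\varphi$ is positive and preserves zero-products. Then $\varphi$ is bounded and self-adjoint, so by \autoref{prp:CharWeightedSelfadjoint} it is a weighted $\ast$-homomorphism, and \autoref{prp:StructureWeightedSelfadjoint} gives $h:=\varphi^{**}(1)\in M(C)\cap C'$ and a $\ast$-homomorphism $\pi_\varphi\colon A\to M(C)\cap\{h\}'$ with $\varphi(a)=h\pi_\varphi(a)$ for all $a\in A$, where $C=C^*(\varphi(A))$. Since $\varphi$ is positive, $h=\varphi^{**}(1)$ is a positive element of $B^{**}$: indeed $1\ge 0$ in $A^{**}$ and $\varphi^{**}$ is again positive (positivity passes to the bitranspose, as $\varphi^{**}$ is a weak*-limit of compressions of $\varphi$, or more directly because $\varphi^{**}$ maps the weak*-closure of $A_+$ into that of $B_+$). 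Thus $\varphi$ is a \emph{positively} weighted $\ast$-homomorphism: $\varphi(a)=h^{1/2}\pi_\varphi(a)h^{1/2}$ using that $h$ commutes with $\pi_\varphi(A)$, and one checks (the observation recorded just before the corollary) that such a map is completely positive and order zero—complete positivity because $a\mapsto h^{1/2}\pi_\varphi(a)h^{1/2}$ is visibly a composition of a $\ast$-homomorphism and a compression, and order zero because $ab=0$ gives $\varphi(a)\varphi(b)=h^{1/2}\pi_\varphi(a)h\pi_\varphi(b)h^{1/2}=h^{1/2}\pi_\varphi(a)\pi_\varphi(b)h\,h^{1/2}=0$.

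The main obstacle, such as it is, is purely bookkeeping: one must make sure that $h=\varphi^{**}(1)$ is genuinely positive and that the factorization $\varphi(a)=h^{1/2}\pi_\varphi(a)h^{1/2}$ is legitimate, which hinges on $h$ commuting with the range of $\pi_\varphi$ (supplied by \autoref{prp:StructureWeightedSelfadjoint}) and on $h^{1/2}$ then commuting with it too (by continuous functional calculus). Everything else is a direct appeal to \autoref{prp:CharWeightedSelfadjoint} and \autoref{prp:StructureWeightedSelfadjoint}; there is no new hard estimate. The final ``in particular'' clause is then immediate, since ``completely positive order zero'' in particular entails ``completely positive.''
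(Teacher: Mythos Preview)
Your proposal is correct and matches the paper's (implicit) argument: the paper states the corollary without proof, deducing it from the sentence immediately preceding it (a weighted $\ast$-homomorphism has positive weight iff it is positive, and a positively weighted $\ast$-homomorphism is completely positive), together with \autoref{prp:CharWeightedSelfadjoint} and the Winter--Zacharias structure theorem cited in the introduction. Your converse direction is exactly this, with the details spelled out.

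One small caution on your forward direction: of your two alternatives, the first (quote \cite[Theorem~3.3]{WinZac09CpOrd0} to get $\varphi(a)=h\pi(a)$ with $h\geq 0$ central and then compute $\varphi(a)\varphi(b)=h^2\pi(ab)=0$) is the clean one and is what the paper has in mind. Your second alternative, invoking \autoref{prp:CharWeightedSelfadjoint} starting only from zero-products of \emph{self-adjoint} elements, does not go through as written: the hypothesis of \autoref{prp:CharWeightedSelfadjoint} is preservation of \emph{arbitrary} zero-products, and its short proof reduces to \autoref{prp:CharWeighted}, which in turn rests on \autoref{prp:CharOrthPres}(1) for arbitrary $a,b,c$. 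If you want to avoid citing \cite{WinZac09CpOrd0} for this direction, the argument in the paper's subsequent remark (using the Kadison inequality for $2$-positive maps to pass from orthogonality of positives to arbitrary zero-products) is the right bridge.
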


\begin{rmk}
Following Sato, \cite{Sat21AlmostOrderZero}, we say that a positive (but not necessarily completely positive) map between \ca{s} is \emph{order zero} if it preserves zero-products of positive elements.
By \cite[Corollary~3.7]{Sat21AlmostOrderZero}, every $2$-positive, order zero map is completely positive.
On the other hand, not every positive, order zero map is automatically completely positive, and
the transpose map on $M_2(\mathbb{C})$ is a counterexample.

An alternative proof of \cite[Corollary~3.7]{Sat21AlmostOrderZero} can be obtained 
from \autoref{prp:CharOrderZero}
by noting that every $2$-positive, order zero map preserves zero-products.
Indeed, we first note that elements $a,b$ in a \ca{} satisfy $ab=0$ if and only if $(a^*a)(bb^*)=0$.
Now, let $\varphi\colon A\to B$ be a $2$-positive, order zero map. 
Then $\varphi$ satisfies the Kadison inequality $\varphi(a)^*\varphi(a)\leq\|\varphi\|\varphi(a^*a)$ for all $a\in A$.
Hence, if $a,b\in A$ satisfy $ab=0$, then $(a^*a)(bb^*)=0$ and therefore
\[
\varphi(a^*a)\varphi(bb^*)=0, \quad
\varphi(a)^*\varphi(a)\leq\|\varphi\|\varphi(a^*a), \andSep
\varphi(b)\varphi(b)^*\leq\|\varphi\|\varphi(bb^*).
\]
This implies $\varphi(a)^*\varphi(a)\varphi(b)\varphi(b)^*=0$, and thus $\varphi(a)\varphi(b)=0$, as desired.
\end{rmk}

\begin{prp}
\label{prp:CB}
Let $\varphi\colon A\to B$ be a bounded, linear map between \ca{s}.
If $\varphi$ preserves range-orthogonality, domain-orthogonality and zero-products, then $\varphi$ is completely bounded with $\|\varphi\|_{cb}=\|\varphi\|=\|\varphi^{**}(1)\|$.
\end{prp}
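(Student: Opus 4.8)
The plan is to apply \autoref{prp:CharWeighted}: since $\varphi$ preserves range-orthogonality, domain-orthogonality and zero-products, the equivalence of~(2) and~(3) there provides a $\ast$-homomorphism $\pi_\varphi\colon A\to B^{**}$ with
\[
\varphi(a)=h\pi_\varphi(a)=\pi_\varphi(a)h\quad\text{for all }a\in A,
\]
where $h:=\varphi^{**}(1)\in B^{**}$. The key fact I would use is that a $\ast$-homomorphism between \ca{s} is completely contractive, so each amplification $\pi_\varphi^{(n)}\colon M_n(A)\to M_n(B^{**})$ satisfies $\|\pi_\varphi^{(n)}\|\leq 1$.

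Next I would write out the amplifications of $\varphi$ explicitly. For $x=[a_{ij}]\in M_n(A)$, an entrywise computation gives
\[
\varphi^{(n)}(x)=\big[\,h\pi_\varphi(a_{ij})\,\big]=(1_n\otimes h)\,\pi_\varphi^{(n)}(x)
\]
in $M_n(B^{**})$, where $1_n\otimes h=\mathrm{diag}(h,\dots,h)$ has norm $\|h\|$. Since the inclusion $M_n(B)\hookrightarrow M_n(B^{**})$ is isometric, this yields
\[
\|\varphi^{(n)}(x)\|\leq\|1_n\otimes h\|\,\|\pi_\varphi^{(n)}(x)\|\leq\|h\|\,\|x\|,
\]
so $\|\varphi^{(n)}\|\leq\|h\|$ for every $n$; in particular $\varphi$ is completely bounded with $\|\varphi\|_{cb}\leq\|h\|$.

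Finally I would close the chain of inequalities. Trivially $\|\varphi\|\leq\|\varphi\|_{cb}$, and since the bitranspose satisfies $\|\varphi^{**}\|=\|\varphi\|$ while the unit of $A^{**}$ has norm $1$ (the case $A=\{0\}$ being trivial), we get
\[
\|h\|=\|\varphi^{**}(1)\|\leq\|\varphi^{**}\|=\|\varphi\|.
\]
Combining, $\|\varphi\|\leq\|\varphi\|_{cb}\leq\|h\|\leq\|\varphi\|$, so $\|\varphi\|=\|\varphi\|_{cb}=\|\varphi^{**}(1)\|$. I do not expect a genuine obstacle here: the only point requiring care is that $\pi_\varphi$ takes values in $B^{**}$ rather than in $B$, so the factorization $\varphi^{(n)}=(1_n\otimes h)\pi_\varphi^{(n)}$ must be read inside $M_n(B^{**})$, and one invokes the isometric embedding $M_n(B)\subseteq M_n(B^{**})$ to transfer the estimate back to $M_n(B)$. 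Everything else is formal.
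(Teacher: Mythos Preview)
Your proposal is correct and follows essentially the same route as the paper: invoke \autoref{prp:CharWeighted} to factor $\varphi$ as $h\pi_\varphi$ with $h=\varphi^{**}(1)$, amplify to get $\varphi^{(n)}=(1_n\otimes h)\pi_\varphi^{(n)}$, use that $\ast$-homomorphisms are completely contractive to obtain $\|\varphi\|_{cb}\leq\|h\|$, and close the chain via $\|h\|\leq\|\varphi^{**}\|=\|\varphi\|\leq\|\varphi\|_{cb}$. Your added remarks about the isometric embedding $M_n(B)\subseteq M_n(B^{**})$ and the degenerate case $A=\{0\}$ are fine points of care, but otherwise the argument matches the paper's.
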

\begin{proof}
Set $h:=\varphi^{**}(1)\in B^{**}$.
By \autoref{prp:CharWeighted}, there exists a $\ast$-homo\-morphism $\pi\colon A\to B^{**}$ such that $\varphi(a)=h\pi(a)$ for all $a\in A$.
We have
\[
\|\varphi^{**}(1)\|
\leq \|\varphi^{**}\|
= \|\varphi\|
\leq \|\varphi\|_{cb}.
\]
Given $n\geq 1$, set $h^{(n)}=1_{M_n}\otimes h \in M_n(B^{**})$, which is the diagonal matrix with  diagonal entries all equal to $h$.
It follows that the amplification $\varphi^{(n)}$ satisfies $\varphi^{(n)}(x)=h^{(n)}\pi^{(n)}(x)$ for all $x\in M_n(A)$.
Using that $\pi^{(n)}$ is a $\ast$-homomorphism, we deduce that
\[
\|\varphi^{(n)}(x)\|
= \|h^{(n)}\pi^{(n)}(x)\|
\leq \|h^{(n)}\| \|\pi^{(n)}(x)\|
\leq \|h\| \|x\|
\]
and thus $\|\varphi^{(n)}\|\leq\|h\|$.
Since this holds for every $n$, we obtain $\|\varphi\|_{cb}\leq\|h\|$, as desired.
\end{proof}

\begin{rmk}
In \cite{GarThi22pre:AutomaticContinuity}, we show that \autoref{prp:CB} also holds for bounded linear maps that only preserve range-orthogonality or domain-orthogonality.
In particular, a bounded, range-orthogonality preserving map is automatically completely bounded.
We also show that a range-orthogonality preserving map from a unital \ca{} that has no one-dimensional irreducible representations is automatically bounded, and hence completely bounded.
\end{rmk}


\begin{thebibliography}{BFPG{\etalchar{+}}08}

\bibitem[ABEV09]{AlaBreExtVil09MapsPresZP}
\bgroup\scshape{}J.~Alaminos\egroup{}, \bgroup\scshape{}M.~Bre\v{s}ar\egroup{},
  \bgroup\scshape{}J.~Extremera\egroup{}, and \bgroup\scshape{}A.~R.
  Villena\egroup{}, Maps preserving zero products,  \emph{Studia Math.}
  \textbf{193} (2009), 131--159.

\bibitem[Are83]{Are83SpectralLampertiOps}
\bgroup\scshape{}W.~Arendt\egroup{}, Spectral properties of {L}amperti
  operators,  \emph{Indiana Univ. Math. J.} \textbf{32} (1983), 199--215.

\bibitem[Bla06]{Bla06OpAlgs}
\bgroup\scshape{}B.~Blackadar\egroup{}, \emph{Operator algebras},
  \emph{Encyclopaedia of Mathematical Sciences} \textbf{122}, Springer-Verlag,
  Berlin, 2006, Theory of \ca{s} and von Neumann algebras, Operator Algebras
  and Non-commutative Geometry, III.

\bibitem[BFPG{\etalchar{+}}08]{BurFerGarMarPer08OrthPresJB}
\bgroup\scshape{}M.~Burgos\egroup{}, \bgroup\scshape{}F.~J.
  Fern\'{a}ndez-Polo\egroup{}, \bgroup\scshape{}J.~J. Garc\'{e}s\egroup{},
  \bgroup\scshape{}J.~Mart\'{\i}nez~Moreno\egroup{}, and \bgroup\scshape{}A.~M.
  Peralta\egroup{}, Orthogonality preservers in \ca{s}, {${\rm JB}^*$}-algebras
  and {${\rm JB}^*$}-triples,  \emph{J. Math. Anal. Appl.} \textbf{348} (2008),
  220--233.

\bibitem[CKLW03]{CheKeLeeWon03PresZeroProd}
\bgroup\scshape{}M.~A. Chebotar\egroup{}, \bgroup\scshape{}W.-F. Ke\egroup{},
  \bgroup\scshape{}P.-H. Lee\egroup{}, and \bgroup\scshape{}N.-C.
  Wong\egroup{}, Mappings preserving zero products,  \emph{Studia Math.}
  \textbf{155} (2003), 77--94.

\bibitem[Gar20]{Gar20CompleteOrthPres}
\bgroup\scshape{}J.~J. Garc\'{e}s\egroup{}, Complete orthogonality preservers
  between \ca{s},  \emph{J. Math. Anal. Appl.} \textbf{483} (2020), 123596, 16.

\bibitem[Gar21]{Gar21ModernLp}
\bgroup\scshape{}E.~Gardella\egroup{}, A modern look at algebras of operators
  on {$L^p$}-spaces,  \emph{Expo. Math.} \textbf{39} (2021), 420--453.

\bibitem[GT15]{GarThi15GpAlgLp}
\bgroup\scshape{}E.~Gardella\egroup{} and \bgroup\scshape{}H.~Thiel\egroup{},
  Group algebras acting on {$L^p$}-spaces,  \emph{J. Fourier Anal. Appl.}
  \textbf{21} (2015), 1310--1343.

\bibitem[GT19]{GarThi19ReprConvLq}
\bgroup\scshape{}E.~Gardella\egroup{} and \bgroup\scshape{}H.~Thiel\egroup{},
  Representations of {$p$}-convolution algebras on {$L^q$}-spaces,
  \emph{Trans. Amer. Math. Soc.} \textbf{371} (2019), 2207--2236.

\bibitem[GT22]{GarThi22pre:AutomaticContinuity}
\bgroup\scshape{}E.~Gardella\egroup{} and \bgroup\scshape{}H.~Thiel\egroup{},
  Automatic continuity for orthogonality-preserving maps between \ca{s}, in
  preparation, 2022.

\bibitem[HKW12]{HirKirWhi12DecApproxNucl}
\bgroup\scshape{}I.~Hirshberg\egroup{}, \bgroup\scshape{}E.~Kirchberg\egroup{},
  and \bgroup\scshape{}S.~White\egroup{}, Decomposable approximations of
  nuclear \ca{s},  \emph{Adv. Math.} \textbf{230} (2012), 1029--1039.

\bibitem[KLW04]{KeLiWon04ZeroProdPresOpValuedFcts}
\bgroup\scshape{}W.-F. Ke\egroup{}, \bgroup\scshape{}B.-R. Li\egroup{}, and
  \bgroup\scshape{}N.-C. Wong\egroup{}, Zero product preserving maps of
  operator-valued functions,  \emph{Proc. Amer. Math. Soc.} \textbf{132}
  (2004), 1979--1985.

\bibitem[Per15]{Per15Note2Local}
\bgroup\scshape{}A.~M. Peralta\egroup{}, A note on 2-local representations of
  \ca{s},  \emph{Oper. Matrices} \textbf{9} (2015), 343--358.

\bibitem[Sat21]{Sat21AlmostOrderZero}
\bgroup\scshape{}Y.~Sato\egroup{}, 2-positive almost order zero maps and
  decomposition rank,  \emph{J. Operator Theory} \textbf{85} (2021), 505--526.

\bibitem[WZ09]{WinZac09CpOrd0}
\bgroup\scshape{}W.~Winter\egroup{} and \bgroup\scshape{}J.~Zacharias\egroup{},
  Completely positive maps of order zero,  \emph{M\"unster J. Math.} \textbf{2}
  (2009), 311--324.

\bibitem[WZ10]{WinZac10NuclDim}
\bgroup\scshape{}W.~Winter\egroup{} and \bgroup\scshape{}J.~Zacharias\egroup{},
  The nuclear dimension of \ca{s},  \emph{Adv. Math.} \textbf{224} (2010),
  461--498.

\bibitem[Wol94]{Wol94DisjPres}
\bgroup\scshape{}M.~Wolff\egroup{}, Disjointness preserving operators on
  \ca{s},  \emph{Arch. Math. (Basel)} \textbf{62} (1994), 248--253.

\bibitem[Won07]{Won07ZeroPordPres}
\bgroup\scshape{}N.-C. Wong\egroup{}, Zero product preservers of \ca{s},  in
  \emph{Function spaces}, \emph{Contemp. Math.} \textbf{435}, Amer. Math. Soc.,
  Providence, RI, 2007, pp.~377--380.

\end{thebibliography}

\providecommand{\etalchar}[1]{$^{#1}$}
\providecommand{\href}[2]{#2}

\end{document}